\theoremstyle{plain}
\newtheorem{theorem}{Theorem}
\newtheorem*{theorem*}{Theorem}
\newtheorem{lemma}[theorem]{Lemma}
\newtheorem{corollary}[theorem]{Corollary}
\newtheorem{proposition}[theorem]{Proposition}
\theoremstyle{definition}
\newtheorem{definition}[theorem]{Definition}
\theoremstyle{remark}
\newtheorem{remark}[theorem]{Remark}
\newcommand*{\R}{\mathbb{R}}
\newcommand*{\N}{\mathbb{N}}
\newcommand*{\F}{\mathcal{F}}
\newcommand*{\E}{\mathbb{E}}
\newcommand*{\Prob}{\mathbb{P}}
\newcommand*{\Leb}{\mathcal{L}}
\newcommand*{\ol}[1]{\overline{#1}}
\newcommand{\sulut}[1]{\left( #1 \right)}
\newcommand{\parens}[1]{\left( #1 \right)}
\newcommand{\joukko}[1]{\left\{ #1 \right\}}
\newcommand{\der}{\mathrm{d}}
\newcommand{\ip}[2]{\left\langle#1,#2\right\rangle}
\DeclareMathOperator{\supp}{supp}
\newcommand*{\mint}[1]{%
  \mint@l{#1}{}%
}
\newcommand*{\mint@l}[2]{%
  \@ifnextchar\limits{%
    \mint@l{#1}%
  }{%
    \@ifnextchar\nolimits{%
      \mint@l{#1}%
    }{%
      \@ifnextchar\displaylimits{%
        \mint@l{#1}%
      }{%
        \mint@s{#2}{#1}%
      }%
    }%
  }%
}
\newcommand*{\mint@s}[2]{%
  \@ifnextchar_{%
    \mint@sub{#1}{#2}%
  }{%
    \@ifnextchar^{%
      \mint@sup{#1}{#2}%
    }{%
      \mint@{#1}{#2}{}{}%
    }%
  }%
}
\def\mint@sub#1#2_#3{%
  \@ifnextchar^{%
    \mint@sub@sup{#1}{#2}{#3}%
  }{%
    \mint@{#1}{#2}{#3}{}%
  }%
}
\def\mint@sup#1#2^#3{%
  \@ifnextchar_{%
    \mint@sup@sub{#1}{#2}{#3}%
  }{%
    \mint@{#1}{#2}{}{#3}%
  }%
}
\def\mint@sub@sup#1#2#3^#4{%
  \mint@{#1}{#2}{#3}{#4}%
}
\def\mint@sup@sub#1#2#3_#4{%
  \mint@{#1}{#2}{#4}{#3}%
}
\newcommand*{\mint@}[4]{%
  \mathop{}%
  \mkern-\thinmuskip
  \mathchoice{%
    \mint@@{#1}{#2}{#3}{#4}%
        \displaystyle\textstyle\scriptstyle
  }{%
    \mint@@{#1}{#2}{#3}{#4}%
        \textstyle\scriptstyle\scriptstyle
  }{%
    \mint@@{#1}{#2}{#3}{#4}%
        \scriptstyle\scriptscriptstyle\scriptscriptstyle
  }{%
    \mint@@{#1}{#2}{#3}{#4}%
        \scriptscriptstyle\scriptscriptstyle\scriptscriptstyle
  }%
  \mkern-\thinmuskip
  \int#1%
  \ifx\\#3\\\else_{#3}\fi
  \ifx\\#4\\\else^{#4}\fi  
}
\newcommand*{\mint@@}[7]{%
  \begingroup
    \sbox0{$#5\int\m@th$}%
    \sbox2{$#5\int_{}\m@th$}%
    \dimen2=\wd0 %
    \let\mint@limits=#1\relax
    \ifx\mint@limits\relax
      \sbox4{$#5\int_{\kern1sp}^{\kern1sp}\m@th$}%
      \ifdim\wd4>\wd2 %
        \let\mint@limits=\nolimits
      \else
        \let\mint@limits=\limits
      \fi
    \fi
    \ifx\mint@limits\displaylimits
      \ifx#5\displaystyle
        \let\mint@limits=\limits
      \fi
    \fi
    \ifx\mint@limits\limits
      \sbox0{$#7#3\m@th$}%
      \sbox2{$#7#4\m@th$}%
      \ifdim\wd0>\dimen2 %
        \dimen2=\wd0 %
      \fi
      \ifdim\wd2>\dimen2 %
        \dimen2=\wd2 %
      \fi
    \fi
    \rlap{%
      $#5%
        \vcenter{%
          \hbox to\dimen2{%
            \hss
            $#6{#2}\m@th$%
            \hss
          }%
        }%
      $%
    }%
  \endgroup
}
\title{Optimal recovery of a radiating source with multiple frequencies along one line}
\author{Tommi Brander}
\affil{Norwegian University of Science and Technology, Department of Mathematical Sciences; tommi.brander@ntnu.no}
\affil{Technical University of Denmark, Department of Applied Mathematics and Computer Science}
\author{Joonas Ilmavirta}
\affil{University of Jyväskylä, Department of Mathematics and Statistics; joonas.ilmavirta@jyu.fi}
\author{Petteri Piiroinen}
\affil{University of Helsinki, Department of Mathematics and Statistics; petteri.piiroinen@helsinki.fi}
\author{Teemu Tyni}
\affil{University of Oulu, Research Unit of Mathematical Sciences; teemu.tyni@helsinki.fi}
\begin{document}

\maketitle


\begin{abstract}
We study an inverse problem where an unknown radiating source is observed with collimated detectors along a single line and the medium has a known attenuation.
The research is motivated by applications in SPECT and beam hardening.
If measurements are carried out with frequencies ranging in an open set, we show that the source density is uniquely determined by these measurements up to averaging over levelsets of the integrated attenuation.
This leads to a generalized Laplace transform.
We also discuss some numerical approaches and demonstrate the results with several examples.
\end{abstract}








\section{Introduction}

We consider the following one-dimensional inverse problem:
The intensity of radiation from an unknown source in a known medium is measured at multiple frequencies.
How uniquely does this determine the density of the source?
A more detailed description of the model is given in section~\ref{sec:model} below.

This physical problem boils down to the following mathematical question on the interval $I=(a,b)$:
Given a function $p\colon I\to\R$, does the knowledge of the function
\begin{equation}
\label{eq:Dlambda}
D(\lambda)
=
\int_a^b\lambda^{p(x)}\rho(x)\der x
\end{equation}
for~$\lambda$ in an open set $U\subset(0,1)$ determine the function $\rho\colon I\to\R$ uniquely?
Uniqueness of~$\rho$ depends on the properties of~$p$ in a peculiar way.

We denote by $P\colon L^2(I)\to L^2(I)$ the unique projection onto $L^2(I,\sigma(p))$, the subspace of $L^2(I)$ consisting of $\sigma(p)$-measurable functions.
Here~$\sigma(p)$ is the smallest sigma-algebra on~$I$ which makes~$p$ measurable and contains sets of zero Lebesgue measure.
This operator satisfies for any $\rho\in L^2(I)$ that the function~$P\rho$ is $\sigma(p)$-measurable and for every $\sigma(p)$-measurable $A\subset I$ we have
\begin{equation}
\int_A P\rho(x)\der x
=
\int_A\rho(x)\der x.
\end{equation}
If~$I$ has measure one, the operator is the conditional expectation~$\E\left[ \rho | \sigma(p) \right] $; for more on conditional expectation we refer to the books~\cite[chapter~2]{Dellacherie:Meyer:1978}\cite[chapter~5]{Kallenberg:1997}.
For more details on the operator, see section~\ref{sec:def-not}.

The conclusion is that the data~$D$ does not in general determine~$\rho$, but it does determine the projection~$P\rho$ and nothing more.
We have unique determination precisely when~$\sigma(p)$ is the whole Lebesgue algebra.
This is formulated precisely in our main theorem, which we prove in two different languages; by a measure-theoretical approach in section~\ref{sec:proof} and a probabilistic one in section~\ref{sec:prob}:

\begin{theorem}
\label{thm:main}
Suppose $\rho \in L^2(I)$ and $p \in L^\infty(I)$.
Let $U\subset(0,1)$ be a nonempty open set.
Then the following are equivalent:
\begin{enumerate}
    \item The function~$D(\lambda)$ defined in~\eqref{eq:Dlambda} vanishes for all $\lambda\in U$.
    \item $P\rho=0$, that is, the function~$\rho$ is orthogonal to $L^{2}(I,\sigma(p))$.
    \item For all $A \in \sigma(p)$ it holds that $\int_A \rho(x) \der x = 0$.
\end{enumerate}
\end{theorem}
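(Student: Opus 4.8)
The plan is to prove the cycle of implications $(2)\Leftrightarrow(3)$, which is essentially a restatement of the defining property of $P$, and then the substantive equivalence $(1)\Leftrightarrow(3)$. For $(2)\Leftrightarrow(3)$: if $P\rho=0$ then for every $A\in\sigma(p)$ we have $\int_A\rho=\int_A P\rho=0$; conversely, if $\int_A\rho=0$ for all $A\in\sigma(p)$, then $\int_A P\rho=0$ for all such $A$, and since $P\rho$ is itself $\sigma(p)$-measurable, testing against $A=\{P\rho>0\}$ and $A=\{P\rho<0\}$ (both in $\sigma(p)$) forces $P\rho=0$ a.e. The direction $(3)\Rightarrow(1)$ is also easy modulo an approximation argument: one would like to write $\lambda^{p(x)}=\exp(p(x)\ln\lambda)$ and expand the exponential as a power series $\sum_k (\ln\lambda)^k p(x)^k/k!$; each $p^k$ is $\sigma(p)$-measurable and bounded (hence in $L^2$), so $\int p^k\rho=0$, and uniform convergence of the series (using $p\in L^\infty$ and $\lambda$ in a compact subset of $(0,1)$) lets one interchange sum and integral to conclude $D(\lambda)=0$.

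The heart of the matter is $(1)\Rightarrow(3)$, equivalently $(1)\Rightarrow(2)$. The key structural observation is that $D(\lambda)=\int_I e^{t p(x)}\rho(x)\,\der x$ with $t=\ln\lambda<0$, so pushing forward the measure $\rho(x)\,\der x$ under $p$ gives a (signed, finite) measure $\mu$ on the compact interval $[\essinf p,\esssup p]\subset\R$ with $D(\lambda)=\int e^{ts}\,\der\mu(s)$, i.e. $D$ as a function of $t$ is the (two-sided, but finite-interval) Laplace transform of $\mu$. The hypothesis says this Laplace transform vanishes for $t$ in a nonempty open subset of $(-\infty,0)$. Since $\mu$ is compactly supported, $t\mapsto\int e^{ts}\,\der\mu(s)$ extends to an entire function of $t\in\C$; vanishing on an open set (indeed on a set with an accumulation point) forces it to vanish identically by the identity theorem. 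Then injectivity of the Laplace/Fourier transform on finite signed measures gives $\mu=0$, i.e. $p_*(\rho\,\der x)=0$ as a signed measure. Unwinding the pushforward, $\int_{p^{-1}(B)}\rho\,\der x=0$ for every Borel $B\subset\R$; since sets of the form $p^{-1}(B)$ together with Lebesgue-null sets generate $\sigma(p)$, and $\rho\,\der x$ annihilates null sets trivially, a monotone-class / Dynkin argument upgrades this to $\int_A\rho\,\der x=0$ for all $A\in\sigma(p)$, which is $(3)$.

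I expect the main obstacle to be the bookkeeping around the pushforward measure and the sigma-algebra $\sigma(p)$: one must be careful that $p$ is only defined up to null sets (so "$p^{-1}(B)$" should really be read modulo null sets, which is precisely why $\sigma(p)$ is defined to contain the null sets), check that the pushforward of the signed measure $\rho\,\der x$ is well-defined and finite (this uses $\rho\in L^2(I)\subset L^1(I)$ since $I$ is bounded), and verify that $\{p^{-1}(B):B\text{ Borel}\}$ is a $\pi$-system generating $\sigma(p)$ up to null sets so that the Dynkin lemma applies. The analytic core — entireness of the Laplace transform of a compactly supported measure and its vanishing from an accumulation point of zeros — is standard, but I would state it cleanly as a lemma since it is doing the real work. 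An alternative to the complex-analytic step, which may be cleaner to write, is to differentiate $D(e^t)$ repeatedly at a single interior point $t_0$ of $U$: the derivatives are $\int s^k e^{t_0 s}\,\der\mu(s)$, and vanishing of all of them together with the Stone--Weierstrass density of polynomials times $e^{t_0 s}$ in $C([\essinf p,\esssup p])$ again yields $\mu=0$; this avoids analytic continuation at the cost of justifying differentiation under the integral sign, which is routine here because of the compact support and $\mu$ being finite.
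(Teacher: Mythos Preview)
Your proof is correct. The argument via the pushforward $\mu=p_{*}(\rho\,\der x)$, viewing $D$ as the Laplace transform of a compactly supported finite signed measure on~$\R$, and invoking analyticity plus Fourier injectivity, goes through exactly as you outline; the bookkeeping around null sets and the passage from $\{p^{-1}(B)\}$ to all of~$\sigma(p)$ is handled correctly by noting that~$\sigma(p)$ is the Lebesgue completion of the pullback sigma-algebra. One small remark: for $(3)\Rightarrow(1)$ the power-series detour is unnecessary, since $\lambda^{p(\cdot)}$ is itself bounded and $\sigma(p)$-measurable, so once you have $(2)$ (equivalently $(3)$) the inner product $D(\lambda)=\langle\lambda^{p(\cdot)},\rho\rangle$ vanishes directly.

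The paper's proof takes a different but closely related route. It stays on~$I$ throughout rather than pushing forward to~$\R$: it differentiates $D$ at an interior point $\lambda_0\in U$ to obtain $\int_I r(p(x))\lambda_0^{p(x)}\rho(x)\,\der x=0$ for every polynomial~$r$, and then invokes the multiplicative system theorem (a functional version of the monotone class theorem) to show that the $L^2$-closure of $\{\lambda_0^{p(\cdot)}r(p(\cdot)):r\text{ polynomial}\}$ is all of $L^2(I,\sigma(p))$, giving $(2)$ directly. Your alternative suggestion --- differentiating and then applying Stone--Weierstrass on the compact range of~$p$ --- is essentially this argument transported to the pushforward side; the paper's probabilistic proof in its Section~3 is even closer to that variant. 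What your pushforward approach buys is a clean reduction to classical Laplace/Fourier uniqueness, making the role of compact support of~$\mu$ (i.e.\ $p\in L^\infty$) transparent and avoiding the multiplicative system machinery. What the paper's intrinsic approach buys is that it sidesteps setting up the signed pushforward carefully and adapts more readily to weaker hypotheses: its probabilistic version needs only $\rho\in L^1$ and~$U$ a set with an accumulation point.
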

The probabilistic approach also works with $\rho \in L^1(I)$ and with $U$ a sequence with a cluster point.
The next corollary follows immediately.

\begin{corollary}
If the functions~$D_1$ and~$D_2$ arise from the functions~$\rho_1$ and~$\rho_2$ by~\eqref{eq:Dlambda}, then $D_1=D_2$ if and only if $P\rho_1=P\rho_2$.

The linear map $\rho\mapsto D$ is injective if and only if~$\sigma(p)$ is the whole Lebesgue algebra. This happens, in particular, when~$p$ is injective.
\end{corollary}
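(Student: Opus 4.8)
The plan is to derive the corollary from Theorem~\ref{thm:main}, using only the elementary theory of orthogonal projections, with a single descriptive-set-theoretic fact reserved for the last assertion.

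First I would note that the map $\rho\mapsto D$ is linear, since the kernel $\lambda^{p(x)}$ in~\eqref{eq:Dlambda} does not depend on the source; hence $D_1-D_2$ is exactly the function~$D$ associated with $\rho_1-\rho_2$. Applying the equivalence (1)$\Leftrightarrow$(2) of Theorem~\ref{thm:main} to $\rho_1-\rho_2$ and using linearity of~$P$ then gives that $D_1$ and $D_2$ agree on~$U$ if and only if $P\rho_1=P\rho_2$. Moreover $\lambda^{p(\cdot)}$ is bounded and $\sigma(p)$-measurable, so the defining property of~$P$ gives $D(\lambda)=\int_I\lambda^{p(x)}\rho(x)\der x=\int_I\lambda^{p(x)}P\rho(x)\der x$ for every $\lambda\in(0,1)$; thus $D$ depends on~$\rho$ only through $P\rho$, and ``$D_1=D_2$'' --- read on~$U$, or equivalently on all of $(0,1)$ --- is the same statement as $P\rho_1=P\rho_2$. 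This is the first assertion.

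For the injectivity statement I would specialise to $\rho_2=0$: the map $\rho\mapsto D$ is injective exactly when $P\rho=0$ forces $\rho=0$ in $L^2(I)$, i.e.\ exactly when the orthogonal projection~$P$ onto $L^2(I,\sigma(p))$ has trivial kernel. For an orthogonal projection this is equivalent to the range being all of the space, $L^2(I,\sigma(p))=L^2(I)$; testing this against indicators of Lebesgue sets $A\subset I$ --- and using that $\sigma(p)$ already contains every Lebesgue-null set, so that $\sigma(p)$-measurability of a representative of such an indicator forces $A\in\sigma(p)$ --- shows it holds if and only if $\sigma(p)$ is the full Lebesgue algebra.

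It remains to see that injectivity of~$p$ makes $\sigma(p)$ the full Lebesgue algebra, and this is the step I expect to be the main obstacle. I would pass to a Borel representative $q$ of~$p$, agreeing with~$p$ off a Lebesgue-null Borel set~$N$, so that $N\in\sigma(p)$ and $q$ is injective on $I\setminus N$, and reduce --- splitting off null pieces --- to showing that an arbitrary Borel set $A\subset I\setminus N$ lies in $\sigma(p)$. At this point the Lusin--Souslin theorem enters: the injective Borel image $q(A)\subset\R$ is again Borel, and since $q$ is injective on $I\setminus N$ and Borel on~$I$ one checks that both $q^{-1}(q(A))$ and $p^{-1}(q(A))$ differ from~$A$ only inside the null set~$N$; hence $A$ differs from $p^{-1}(q(A))\in\sigma(p)$ by a null set, so $A\in\sigma(p)$. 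The linearity bookkeeping and the projection characterisation are routine; the genuine content lies in this last step, where Lusin--Souslin is what turns a merely measurable injection into a generator of the whole Lebesgue algebra.
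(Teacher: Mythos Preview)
Your argument is correct and follows the route the paper implicitly has in mind: the paper offers no proof beyond ``the next corollary follows immediately'' together with the one-line assertion ``If~$p$ injective, then $\sigma(p)=\Leb$'' inside the proof of Theorem~\ref{thm:main}. Your use of linearity plus the equivalence $(1)\Leftrightarrow(2)$ applied to $\rho_1-\rho_2$, and the translation of injectivity of~$P$ into $L^2(I,\sigma(p))=L^2(I)$ and hence $\sigma(p)=\Leb$, is exactly the intended unpacking.

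Where you go beyond the paper is the final step, invoking Lusin--Souslin to show that an injective measurable~$p$ generates the full Lebesgue algebra. The paper simply asserts this. Your argument is the right one for the abstract hypothesis $p\in L^\infty(I)$, and it is carried out correctly: pass to a Borel representative~$q$, use that $q$ is injective off a null set, and pull back Borel sets via the Borel image $q(A)$. It is worth noting that in the paper's physical setting $p(x)=\int_a^x\beta(y)\,\der y$ is continuous, so injectivity forces strict monotonicity and then preimages of intervals are intervals; in that case $\sigma(p)$ contains all Borel sets and hence all of~$\Leb$ without any descriptive set theory. Your Lusin--Souslin argument is what is genuinely needed to cover the general measurable case stated in the corollary, and you are right to flag it as the only step with real content.
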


The function $D\colon U\to\R$ defined by~\eqref{eq:Dlambda} is the data, and it depends linearly on the unknown function~$\rho$.
Therefore it follows from the theorem that a function $\rho\in L^2(I)$ is determined by~$D$ up to an element of the space $L^{2}(I,\sigma(p))^\perp\subset L^2(I)$, which is the kernel of the linear operator $\rho\mapsto D$.
Another way of viewing this is that the push-forward of $\rho \der x$ under $p$ is recovered.%

The theorem completely characterizes what can be said about~$\rho$, given~$p$ and~$D$.
If the attenuation coefficient is strictly positive (see section~\ref{sec:model}), then~$p$ is strictly increasing and the source density~$\rho$ is determined fully uniquely.

For a concrete situation where the result applies, consider a medium composed of a single material.
That material has been CT scanned so as to determine its density~$\beta(x)$.
Then suppose another substance is added, and it emits radiation on a broad spectrum.
The density of this new material on a single line can be monitored by measuring intensity of radiation coming along that line at multiple frequencies with a collimated detector.
If $\beta>0$, this information determines the density~$\rho$ of the source uniquely.
To monitor the intensity of the source as a function of time after the initial CT scan, it is sufficient to use a single line for measurements.
This idea is similar to that used in SPECT and PET where a CT scan is first needed to map the attenuation before something else is used to image the radiating source.
For more on related imaging modalities, see section~\ref{sec:background}.

\subsection{The model}
\label{sec:model}

Suppose the source of radiation is $f(\omega,x)$ and the attenuation coefficient is $\mu(\omega,x)$.
The frequency~$\omega$ takes values in an open set $U\subset(0,\infty)$ and $x\in I=(a,b)$.
The measurement is the intensity of radiation at~$a$, which is given by
\begin{equation}
M(\omega)
=
\int_a^b
e^{-\int_a^x\mu(\omega,y)\der y}
f(\omega,x)
\der x
\end{equation}
according to Beer-Lambert's law~\cite{Ingle:Crouch:1988}.
The physical problem is to recover the source~$f$ using the measurement~$M(\omega)$ for a large number of frequencies~$\omega$.

To do this, structural assumptions on~$\mu$ and~$f$ are needed.
We assume that they both factor: $\mu(\omega,x)=\alpha(\omega)\beta(x)$ and $f(\omega,x)=\epsilon(\omega)\rho(x)$.
The functions $\beta$ and $\rho$ can be regarded as the spatial densities of the absorbent and the source.
The functions $\alpha$ and $\epsilon$ correspond to ``spectral densities'' and depend on the physical process behind attenuation and emission.
We note that similar assumptions have been made in an earlier study~\cite[remark 1]{Gourion:Noll:2002}.

If both the absorbent and the source are composed of a single material, this factorization is well-justified.
The rate of absorption or emission is directly proportional to the density, and the functions $\alpha$ and $\epsilon$ are simply the coefficients of proportionality which may well --- and generally do --- depend on frequency.
If there are multiple materials, the frequency dependence can be different for the different materials, and the overall attenuation coefficient and source no longer factorize.

We introduce two auxiliary functions, $\phi(\omega)=e^{-\alpha(\omega)}$ and $p(x)=\int_a^x\beta(y)\der y$.
We work on a frequency range where $\alpha(\omega)>0$, and so $\phi(\omega)=\lambda\in(0,1)$.
We assume that there is a function $\eta\colon U\to\R$ so that $\phi(\eta(\lambda))=\lambda$ for all $\lambda\in U$.
That is,~$\eta$ is a right inverse of~$\phi$, and it exists for some interval~$U$ if, for example,~$\alpha$ is continuously differentiable and non-constant.

With these assumptions the measurement~$M(\omega)$ may be processed to yield the data
\begin{equation}
D(\lambda)
=
\frac{M(\eta(\lambda))}{\epsilon(\eta(\lambda))}
\end{equation}
used in the equation~\eqref{eq:Dlambda} as a function of~$\lambda$.

\subsection{Background}
\label{sec:background}

In the inverse problem presented above, the goal is to reconstruct the source when the attenuation is known.
Well-studied imaging modalities with the same goal are instances of emission computer tomography.
We mention single-photon emission computed tomography (SPECT) and positron emission tomography (PET).

In both SPECT and PET a radioactive substance is injected into the target.
The substance decays and emits gamma radiation, which is detected outside the target.
From these measurements, one tries to reconstruct the location of the radioactive substance.
In SPECT, the substance emits single gamma-ray photons, which are then detected.
In PET, it emits positrons, which soon combine with an electron, shooting two gamma ray photons to opposite directions; this pair is then detected~\cite{Lewitt:Matej:2003,Gourion:Noll:2002,Welch:Gullberg:Christian:Li:Tsui:1995}.
In SPECT, some radioactive substances radiate at several frequencies~\cite{Welch:Gullberg:Christian:Li:Tsui:1995}.

In both PET and SPECT the reconstruction is improved if the anatomy of the target is known a~priori~\cite{Gourion:Noll:2002}.
A CT scan is a common approach.
Once the anatomy is known, the values of attenuation can be recovered based on known values.
We likewise assume a known attenuation and an unknown source; hence, we are investigating a model of one-dimensional multispectral SPECT/PET.

When a multispectral X-ray beam passes through a material, low-energy photons are typically attenuated more strongly than high-energy ones, which changes the frequency profile of the beam. This is called beam hardening~\cite{Brooks:DiChiro:1976}.
Our model is consistent with this phenomenon -- we assume the factorization $\mu(\omega,x) = \alpha(\omega)\beta(x)$, where the $\alpha$ is the dependency of the attenuation on the frequency.
Our model can not only take beam hardening into account but make use of it.

Multispectral (also multi-energy, multichromatic, spectral, spectroscopic, energy-selective, energy-sensitive, energy discrimination, or colour) X-ray tomography started with the work of Alvarez and Macovski~\cite{Alvarez:Macovski:1976}.
Incoming photons are classified in a number, often two, of energy bins according to their energy by the photon-counting detectors, after which one can make separate reconstructions at different energy levels or attempt a joint reconstruction from all the available information.
The two energy levels are especially natural due to Compton and photoelectric effects~\cite{Alvarez:Macovski:1976}.
The main challenges are that the measurement devices are expensive and that the smaller amount of radiation leads to worse reconstructions at every energy level~\cite{Taguchi:Iwanczyk:2013}.
For more on multispectral X-ray tomography we refer to the reviews~\cite{Lehmann:Alvarez:1986,McCollough:Leng:Yu:Fletcher:2015}.

Recovering source terms in an attenuating medium can be formulated as the attenuated or exponential X-ray or Radon transform~\cite[section 8.8]{Deans:2007}\cite{Ilmavirta:Monard}\cite[section VI-C]{Lewitt:Matej:2003}\cite[chapter~8]{Sharafutdinov:1994}, which is a multidimensional theory that only uses measurements at a single frequency.
The attenuated Radon transform has an explicit inversion formula~\cite{Novikov:2002,Natterer:2001,Finch:2003}.
The present reconstruction theory is one-dimensional, and the X-ray transform with or without attenuation is never injective in one dimension.
The key is to use several different attenuations or weights along the fixed line, and in our case this is achieved by using a large number of frequencies.

A different way of using several weights is to study the momentum ray transform~\cite{Sharafutdinov:1994,Krishnan:Venkateswaran:Manna:Sahoo:Sharafutdinov:2019}.
In that problem, one defines the momentum ray transform for points $(x,\xi) \in T S^{d-1}$ by
\begin{equation}
\sulut{I^kf}\sulut{x,\xi} = \int_{-\infty}^\infty t^k \ip{f(x+t\xi)}{\xi^m} \der t
\end{equation}
for suitable tensor fields~$f$, and tries to recover~$f$ from the integrals indexed by all points $(x,\xi)$ and sufficient number of powers~$k$.
The usual X-ray transform~$I^0$ of a tensor field only determines the field up to a gauge, but using moments up to the order of the tensor field determines it uniquely.
For recent results in tensor tomography, we refer to~\cite{Paternain:Salo:Uhlmann:2014,Paternain:Salo:Uhlmann:2015,Ilmavirta:Monard}, and we also mention the classical book of Sharafutdinov~\cite{Sharafutdinov:1994}.
In the same spirit of using moments, in the works~\cite{Choi:Ginting:Jafari:Mnatsakanov,Milanfar:1993,Milanfar:Karl:Willsky:1996} the moments of noisy measurements are related to the moments of the unknown density function.

The Hausdorff moment problem~\cite{Schmudgen:2017,Widder:1946} asks: Given a sequence~$(s_n)$, does there exist a measure~$\mu$ such that, for all $n \in \N$,
\begin{equation}
s_n = \int_0^1 x^n \der \mu ?
\end{equation}
If it exists, is it unique?
In our problem, we know that the measure $\rho(x) \der x$ exists and want to understand its uniqueness, or, in the language of moment problem literature, determinacy.
More fundamentally, whereas in the moment problem one couples the measure with polynomials, we use polynomials of a function~$p$, which might not be continuous or injective.

Our main theorem turns out to be similar to an inverse problems result for the variable exponent $p(\cdot)$-Laplacian~\cite{Brander:Winterrose:2019}.
The methods are quite similar, but in the variable exponent case the equivalent of the measurements~$\lambda$ are explicitly related to a quantity~$K_\lambda$, which cannot be expressed analytically as a function of~$\lambda$, and the measurements are, using the notations in this paper,
\begin{equation}
\int_I \rho(x) K_\lambda^{p(x)/(p(x)-1)}.
\end{equation}
The intermediate quantity~$K_\lambda$ causes complications in the proofs, but the final results are similar to the present ones.
In another result the exponent~$p$ has been recovered~\cite{Brander:Siltakoski}, which suggest that it might be possible to recover the attenuation from known sources in the present model.

Our problem can also be seen as inverting a generalized Laplace transform~\cite{Widder:1946}.
Namely, if~$\rho$ is continuously differentiable and satisfies $\rho'>0$, then after changing the variable of integration in~\eqref{eq:Dlambda} from~$x$ to $y=p(x)$ and writing $\lambda=e^{-\alpha}$ (see section~\ref{sec:model}), the data can be rewritten as
\begin{equation}
D(\lambda)
=
\tilde D(\alpha)
=
\int_{\tilde a}^{\tilde b}e^{-\alpha y}\tilde\rho(y)\der y
,
\end{equation}
where $\tilde a=p(a)$, $\tilde b=p(b)$, and
$\tilde\rho(y)=\rho(p^{-1}(y))/p'(p^{-1}(y))$.
Therefore~$\tilde D$ is the Laplace transform of~$\tilde\rho$.
However, when~$p$ is less well-behaved, such reduction to Laplace transform does not work.
We also point out that putting $p(x)=x$ in theorem~\ref{thm:main} implies that if~$\rho$ is a compactly supported~$L^2$ function whose Laplace transform vanishes on an interval, then $\rho=0$.
This corollary is of course not new~\cite[chapter 2, section 6]{Widder:1946}.

One can also think of equation~\eqref{eq:Dlambda} as a Fredholm integral equation of the first kind for the compact operator $\widetilde{D}\colon L^2(I)\to L^2(U)$ given by $\widetilde{D}(\rho)(\lambda) \coloneqq \int_a^b\lambda^{p(x)}\rho(x)\der x$.

\subsection{Discussion}

If~$p$ is piecewise constant --- which corresponds to the attenuation being a sum of delta functions corresponding to thin absorbent films --- then the data~$D$ determines the average of~$\rho$ on every piece.
Nothing else is determined, and the averages of~$\rho$ over the levelsets of~$p$ is optimal information.

If~$p$ is strictly increasing --- which corresponds to strictly positive attenuation --- then heuristically the levelsets are points and the averages should determine~$\rho$ uniquely.
This is indeed the case, as the sigma-algebra generated by~$p$ is the full Lebesgue algebra.

Non-negativity of the attenuation coefficient implies that~$p$ is increasing.
The model can be extended to the case where $p\colon(a,b)\to[0,\infty]$, where $p=\infty$ corresponds to the intensity being attenuated all the way to zero.
If $p=\infty$ on some subinterval $[b',b)\subset(a,b)$, then the data tells nothing about~$\rho$ on this subinterval as $\lambda^p=0$ on this set.
Therefore we may restrict the problem to the interval $(a,b')$ with no loss of data.
It is thus reasonable to assume that on the interval of interest $p<\infty$.

Very weak regularity assumptions on the attenuation coefficient~$\beta$ are sufficient.
If $\beta\in L^1(I)$, then~$p$ is absolutely continuous, which is more than enough for our theorem.

Physically~$p$ is increasing (since $\beta\geq0$), but for mathematical purposes it may be any bounded measurable function.
If $I=[-1,1]$ and $p(x)=x^2$, then the sigma-algebra~$\sigma(p)$ generated by~$p$ consists of subsets of~$I$ that are symmetric with respect to reflection up to an error of measure zero.
Then the data~$D$ determines the symmetric part of~$\rho$ uniquely but does not constrain the antisymmetric part at all.

The rough idea of the proof is that linear combinations of functions $x\mapsto\lambda^{p(x)}$ indexed by $\lambda\in U$ can be used to approximate any $\sigma(p)$-measurable function $I\to\R$.
Thus $D=0$ is equivalent with~$\rho$ being perpendicular to the subspace of these functions.
This implies that variations of~$\rho$ within levelsets of~$p$ are undetectable.
This phenomenon is well-illustrated by a simple observation which we provide next.

\begin{proposition}
Suppose $c \in \R$ and let $\ol \rho \in L^{2}\sulut{I}$ be such that $\supp\sulut{\ol \rho} \subseteq p^{-1}\sulut{\joukko{c}}$ and
\begin{equation}
\int_{p^{-1}\sulut{\joukko{c}}} \ol \rho(x) \der x = 0.
\end{equation}
Then, for all $\lambda \ge 0$,
\begin{equation}
\int_{I} \sulut{\ol \rho(x) + \rho(x)} \lambda^{p(x)} \der x = \int_{I} \rho(x) \lambda^{p(x)} \der x.
\end{equation}
\end{proposition}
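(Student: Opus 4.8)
The plan is to peel $\ol\rho$ off from $\rho$ by linearity of the integral, reducing the whole statement to the single claim that $\int_I \ol\rho(x)\,\lambda^{p(x)}\der x = 0$ for every $\lambda\ge 0$. Granting this, the asserted identity is immediate, since
\begin{equation}
\int_{I}\sulut{\ol\rho(x)+\rho(x)}\lambda^{p(x)}\der x
=
\int_I\ol\rho(x)\,\lambda^{p(x)}\der x + \int_I\rho(x)\,\lambda^{p(x)}\der x ,
\end{equation}
where both integrals converge because $\rho,\ol\rho\in L^2(I)$, the interval $I$ has finite measure, $p\in L^\infty(I)$, and $x\mapsto\lambda^{p(x)}$ is bounded for each fixed $\lambda\ge 0$.

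To prove the reduced claim I would first record that $p^{-1}(\{c\})$ is Lebesgue measurable, since $p$ is measurable and $\{c\}$ is Borel, so that the hypotheses are meaningful. The assumption $\supp(\ol\rho)\subseteq p^{-1}(\{c\})$ says that $\ol\rho$ vanishes almost everywhere on $I\setminus p^{-1}(\{c\})$, hence
\begin{equation}
\int_I\ol\rho(x)\,\lambda^{p(x)}\der x
=
\int_{p^{-1}(\{c\})}\ol\rho(x)\,\lambda^{p(x)}\der x .
\end{equation}
On $p^{-1}(\{c\})$ one has $p(x)=c$ by definition of the set, so the weight $\lambda^{p(x)}$ is there the \emph{constant} $\lambda^{c}$ (in the model $\beta\ge 0$, whence $p\ge 0$ and $c\ge 0$ on any nonempty level set, so $\lambda^{c}$ is unambiguous even at $\lambda=0$). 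Pulling this constant out of the integral and invoking the second hypothesis $\int_{p^{-1}(\{c\})}\ol\rho(x)\der x = 0$ finishes the argument:
\begin{equation}
\int_{p^{-1}(\{c\})}\ol\rho(x)\,\lambda^{p(x)}\der x
=
\lambda^{c}\int_{p^{-1}(\{c\})}\ol\rho(x)\der x = 0 .
\end{equation}

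I do not expect any genuine obstacle here: the proposition is essentially the observation made just before its statement, namely that a perturbation of the source supported on a single level set of $p$ with zero average on that set is invisible to the data $D$ --- the most transparent instance of the kernel $L^{2}(I,\sigma(p))^\perp$ of the map $\rho\mapsto D$ appearing in Theorem~\ref{thm:main}. The only two points deserving a sentence of care are the measurability of $p^{-1}(\{c\})$ and the interpretation of $\lambda^{p(x)}$ at the endpoint $\lambda=0$, both dispatched above from the standing assumptions on $p$.
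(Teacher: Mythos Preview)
Your proof is correct and is exactly the ``straightforward calculation'' the paper alludes to in lieu of a written proof: linearity reduces to showing $\int_I \ol\rho\,\lambda^{p}=0$, and on the level set $p^{-1}(\{c\})$ the weight is the constant $\lambda^c$, which factors out of the vanishing integral. Your parenthetical about the endpoint $\lambda=0$ is a nice touch, though note the paper formally allows arbitrary $p\in L^\infty$; the edge case is really an ambiguity in the proposition's statement rather than in your argument.
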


The proof is a straightforward calculation.

\subsection*{Acknowledgements}
T.B.\ was partially funded by grant no.\ 4002-00123 from the Danish Council for Independent Research | Natural Sciences and partially by the Research Council of Norway through the FRIPRO Toppforsk project ''Waves and nonlinear phenomena''.
J.I.\ was supported by the Academy of Finland (decision~295853).
T.T.\ was supported by the Academy of Finland (application number~312123, Centre
of Excellence of Inverse Modelling and Imaging 2018–2025).

We would like to thank Alexander Meaney for a discussion and valuable references and the anonymous referees for insightful comments and suggestions.

\section{Main results}
\label{sec:proof}
In this section we specify notation and prove the main theorem.
An alternative stochastic proof can be found in section~\ref{sec:prob}.

\subsection{Definitions and notation}
\label{sec:def-not}

We denote by~$\Leb$ the Lebesgue sigma-algebra on $I=(a,b)$.
We always use the Lebesgue measure~$\der x$.
The sigma-algebra~$\sigma(p)$ generated by a function $p\colon I\to\R$ is also a sigma-algebra on~$I$, and we define it as the smallest sigma-algebra so that~$p$ is measurable and sets of zero Lebesgue outer measure are measurable.
For more on sigma-algebras generated by sets and functions, see the books~\cite[chapter~1, definition~5]{Dellacherie:Meyer:1978}\cite[chapter~1]{Kallenberg:1997}.

The following lemma states that~$\sigma(p)$ does not depend on the representative of~$p$.

\begin{lemma}
If $g = h$ almost everywhere, then $\sigma(g)=\sigma(h)$.
\end{lemma}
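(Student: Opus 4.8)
The plan is to show the two inclusions $\sigma(g) \subseteq \sigma(h)$ and $\sigma(h) \subseteq \sigma(g)$; by symmetry it suffices to prove just one, say $\sigma(g) \subseteq \sigma(h)$. Recall that $\sigma(h)$ is, by definition, the smallest sigma-algebra on $I$ that makes $h$ measurable and contains every set of zero Lebesgue outer measure. Since $\sigma(g)$ is the \emph{smallest} sigma-algebra with the analogous property for $g$, it is enough to verify that $\sigma(h)$ is itself a sigma-algebra that makes $g$ measurable and contains the Lebesgue-null sets; minimality then forces $\sigma(g) \subseteq \sigma(h)$.

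The second and third requirements are immediate: $\sigma(h)$ is a sigma-algebra by construction, and it contains all sets of zero Lebesgue outer measure by construction. So the only thing to check is that $g$ is $\sigma(h)$-measurable, i.e. that $g^{-1}(B) \in \sigma(h)$ for every Borel set $B \subseteq \R$. The key step is the elementary set identity
\begin{equation}
g^{-1}(B) = \bigl( h^{-1}(B) \setminus N \bigr) \cup \bigl( g^{-1}(B) \cap N \bigr),
\end{equation}
where $N = \{ x \in I : g(x) \neq h(x) \}$. By hypothesis $N$ has zero Lebesgue measure, hence zero Lebesgue outer measure, so $N \in \sigma(h)$ and every subset of $N$ likewise has zero outer measure and lies in $\sigma(h)$; in particular both $h^{-1}(B) \setminus N$ and $g^{-1}(B) \cap N$ belong to $\sigma(h)$ (the former because $h^{-1}(B) \in \sigma(h)$ and $\sigma(h)$ is closed under set difference, the latter as a subset of a null set). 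Therefore $g^{-1}(B) \in \sigma(h)$, which gives the measurability of $g$ and hence $\sigma(g) \subseteq \sigma(h)$. Swapping the roles of $g$ and $h$ yields the reverse inclusion, and we conclude $\sigma(g) = \sigma(h)$.

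There is no real obstacle here — the argument is a routine application of the defining minimality property of the generated sigma-algebra together with the fact that the symmetric-difference set is null. The one point worth stating carefully is that the definition of $\sigma(p)$ used in this paper bakes the null sets into the sigma-algebra, so that one never needs to pass to a completion after the fact; this is precisely what makes the symmetric-difference set harmless. If one used instead the ``bare'' generated sigma-algebra without the null sets, the statement would fail, so it is important to invoke the definition from section~\ref{sec:def-not} rather than the textbook $\sigma(p) = p^{-1}(\mathcal{B}(\R))$.
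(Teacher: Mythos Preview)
Your proof is correct and follows essentially the same approach as the paper: both hinge on the observation that $g^{-1}(B)$ and $h^{-1}(B)$ differ only inside the null set $N=\{g\neq h\}$, and then use that $\sigma(h)$ contains all null sets by construction. The only cosmetic difference is that the paper argues element-wise (each $A\in\sigma(g)$ equals some $g^{-1}(B)$ up to a null set, hence lies in $\sigma(h)$), whereas you invoke the minimality of $\sigma(g)$ directly by checking that $g$ is $\sigma(h)$-measurable; your packaging is arguably a touch cleaner since it avoids appealing to the explicit description of $\sigma(g)$ as $\tilde\sigma(g)$ modulo null sets.
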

\begin{proof}
Write as $\tilde \sigma(g)$ the preimage $g^{-1}(\sigma(\R))$, which is a sigma-algebra~\cite[lemma 1.3]{Kallenberg:1997}.
Suppose $A \in \tilde\sigma(g
)$.
There then exists a measurable $B \subset \R$ such that $A = g^{-1}(B)$.
Now
\begin{equation}
    \sulut{A \setminus h^{-1}(B)} \cup \sulut{h^{-1}(B) \setminus A} \subseteq \joukko{x \in I ; g(x) \neq h(x)},
\end{equation}
which is a null set.
Because $A = h^{-1}(B)$ up to null sets, we have $A \in \sigma(h)$.

If $A' \in \sigma(g)$, then it is equal to some $A \in \tilde \sigma(g)$ up to null sets, and by the above reasoning $A' \in \sigma(h)$.
\end{proof}

The space $L^q(I,\sigma(p))$ is the space of $\sigma(p)$-measurable functions in~$L^q(I)$ up to almost everywhere equality.
If $\sigma(p)=\Leb$ (the Lebesgue sigma-algebra), then $L^q(I,\sigma(p))=L^q(I)$.

Since $L^2\sulut{I,\mathcal{L}}$ is a complete Hilbert space and $L^2\sulut{I,\sigma(p)}$ a closed convex set, we can define the following unique orthogonal projection.
\begin{definition}
The mapping~$P$ is the orthogonal projection $P \colon L^2\sulut{I,\mathcal{L}} \to L^2\sulut{I,\sigma(p)}$.
\end{definition}

\subsection{Lemmas}

\begin{lemma}
\label{lemma:proj_to_int}
For all $A \in \sigma(p)$ the projection~$P$ satisfies
\begin{equation}
\int_A f \der x = \int_A Pf \der x.
\end{equation}
\end{lemma}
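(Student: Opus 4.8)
The plan is to prove Lemma~\ref{lemma:proj_to_int} directly from the defining property of the orthogonal projection~$P$. The key observation is that for any $A \in \sigma(p)$, the indicator function $\mathbbm{1}_A$ belongs to $L^2(I,\sigma(p))$ — indeed, $A$ has finite measure since $I$ is a bounded interval, so $\mathbbm{1}_A \in L^2(I)$, and it is $\sigma(p)$-measurable by definition of~$A \in \sigma(p)$. Therefore $\mathbbm{1}_A$ is fixed by the projection, i.e.\ $P \mathbbm{1}_A = \mathbbm{1}_A$, and $P$ being an orthogonal projection onto $L^2(I,\sigma(p))$ is self-adjoint: $P = P^* = P^2$.

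First I would write $\int_A f \der x = \ip{f}{\mathbbm{1}_A}$, the $L^2(I)$ inner product (real-valued, so no conjugation issue; all functions here are real by the standing assumptions). Then I would use self-adjointness of $P$ together with $P\mathbbm{1}_A = \mathbbm{1}_A$ to compute
\begin{equation}
\int_A f \der x = \ip{f}{\mathbbm{1}_A} = \ip{f}{P\mathbbm{1}_A} = \ip{Pf}{\mathbbm{1}_A} = \int_A Pf \der x,
\end{equation}
which is exactly the claimed identity. The only genuinely non-routine point to articulate carefully is why $P$ is self-adjoint and why $\mathbbm{1}_A$ is a fixed point; both are standard facts about orthogonal projections onto closed subspaces of a Hilbert space, but it is worth stating them explicitly since the definition in the excerpt only introduces $P$ as "the orthogonal projection."

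The main obstacle — such as it is — is purely bookkeeping: ensuring that $\mathbbm{1}_A \in L^2(I,\sigma(p))$, which requires $|A| < \infty$ (true because $I = (a,b)$ is bounded) and $\sigma(p)$-measurability of $A$ (true by hypothesis $A \in \sigma(p)$). An alternative route, avoiding any appeal to self-adjointness, is to write $f = Pf + (f - Pf)$ with $f - Pf \perp L^2(I,\sigma(p))$; then $\ip{f - Pf}{\mathbbm{1}_A} = 0$ directly gives $\int_A f \der x = \int_A Pf \der x$. I would likely present this second version as it uses only the orthogonality characterization of $P$ and nothing else. Either way the proof is two or three lines.
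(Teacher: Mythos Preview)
Your proposal is correct, and the ``alternative route'' you describe at the end --- decomposing $f = Pf + (f - Pf)$ with $f - Pf \perp L^2(I,\sigma(p))$ and pairing against $\chi_A \in L^2(I,\sigma(p))$ --- is exactly the paper's proof. The self-adjointness version is of course equivalent, but since you indicate you would present the orthogonal-decomposition version, your write-up would match the paper's argument essentially line for line.
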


\begin{proof}
Since $A \in \sigma(p)$, the characteristic function~$\chi_A$ is in $L^2\sulut{I,\sigma(p)}$.
On the other hand, $f-Pf \in \sulut{L^2\sulut{I,\sigma(p)}}^\perp$.
Hence,
\begin{equation}
\int_A f \der x = \int_I \chi_A \sulut{Pf + (f-Pf)} \der x = \int_I \chi_A Pf \der x = \int_A Pf \der x.
\end{equation}
\end{proof}

\begin{lemma}
\label{lemma:int_to_proj}
If a mapping~$Q \colon L^2\sulut{I,\mathcal{L}} \to L^2\sulut{I,\sigma(p)}$ satisfies, for all $A \in \sigma(p)$,
\begin{equation}
\int_A f \der x = \int_A Qf \der x,
\end{equation}
then it is the orthogonal projection onto $L^2\sulut{I,\sigma(p)}$.
\end{lemma}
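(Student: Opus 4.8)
The plan is to show that the hypothesis on $Q$ forces $Q$ to agree with $P$ on all of $L^2(I,\mathcal L)$, and then invoke the definition of $P$. First I would observe that both $Qf$ and $Pf$ lie in $L^2(I,\sigma(p))$, so it suffices to show that $Qf - Pf = 0$ as an element of this subspace. By the defining property of $Q$ together with Lemma \ref{lemma:proj_to_int}, for every $A \in \sigma(p)$ we have
\begin{equation}
\int_A Qf \der x = \int_A f \der x = \int_A Pf \der x,
\end{equation}
hence $\int_A (Qf - Pf)\der x = 0$ for all $A \in \sigma(p)$.

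The key step is then to conclude that a $\sigma(p)$-measurable function whose integral over every $A \in \sigma(p)$ vanishes must be zero almost everywhere. Write $g = Qf - Pf$, which is $\sigma(p)$-measurable. Taking $A = \{g > 0\} \in \sigma(p)$ gives $\int_{\{g>0\}} g \der x = 0$ with a nonnegative integrand, so $g \le 0$ almost everywhere; the symmetric choice $A = \{g < 0\}$ gives $g \ge 0$ almost everywhere. Therefore $g = 0$ almost everywhere, i.e.\ $Qf = Pf$ in $L^2(I,\sigma(p))$. Since $f \in L^2(I,\mathcal L)$ was arbitrary, $Q = P$, which is the orthogonal projection onto $L^2(I,\sigma(p))$ by definition.

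I do not expect a serious obstacle here; the only point requiring a moment's care is that the sets $\{g > 0\}$ and $\{g < 0\}$ are genuinely in $\sigma(p)$, which holds because $g$ is $\sigma(p)$-measurable and $(0,\infty)$ and $(-\infty,0)$ are Borel. One could alternatively argue that $L^2(I,\sigma(p))$ is spanned (densely) by characteristic functions $\chi_A$ with $A \in \sigma(p)$, so that $\int_I g\, \chi_A \der x = 0$ for all such $A$ forces $g$ to be orthogonal to the whole subspace, and since $g$ itself lies in the subspace, $g = 0$; this phrasing makes the connection to orthogonal projection more transparent but is essentially the same computation.
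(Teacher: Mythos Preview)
Your proof is correct. The route differs slightly from the paper's: you reduce to showing $Qf=Pf$ by invoking Lemma~\ref{lemma:proj_to_int} and then arguing that a $\sigma(p)$-measurable function with vanishing integral over every $A\in\sigma(p)$ is zero, via the level-set trick $A=\{g>0\}$. The paper instead verifies the orthogonality characterization directly: from $\int_I \chi_A(f-Qf)\,\der x=0$ for all $A\in\sigma(p)$ and density of simple functions, $f-Qf$ is orthogonal to $L^2(I,\sigma(p))$, which together with $Qf\in L^2(I,\sigma(p))$ forces $Q$ to be the orthogonal projection. Your alternative phrasing at the end is essentially this argument. The trade-off is minor: your main argument uses Lemma~\ref{lemma:proj_to_int} and the existence of $P$ as inputs, while the paper's argument is self-contained and makes the Hilbert-space geometry explicit; both are equally short.
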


\begin{proof}
The map~$Q$ is a linear projection by definition.

For the characteristic function~$\chi_A$ of any $A \in \sigma(p)$ we have
\begin{equation}
\int_I \chi_A (f-Qf) \der x = 0,
\end{equation}
whence $f-Qf$ is orthogonal to $L^2\sulut{I,\sigma(p)}$, since any function there can be approximated by measurable step functions.
Because the range of~$Q$ is $L^2\sulut{I,\sigma(p)}$ and~$f$ is arbitrary, this shows orthogonality.
\end{proof}

Recall the data~$D \colon \R_+ \to \R$,
\begin{equation}
D(\lambda)
=
\int_a^b\lambda^{p(x)}\rho(x)\der x.
\end{equation}
\begin{lemma}
\label{lma:polynomi}
Suppose $\lambda_0$ is an interior point of an open set~$U \subset [0,1]$.
If $D(\lambda)=0$ for all $\lambda\in U$, then
\begin{equation}
\label{eq:poly-int}
\int_a^b r(p(x)) \lambda_0^{p(x)} \rho(x) \der x=0
\end{equation}
for all polynomial functions~$r$.
\end{lemma}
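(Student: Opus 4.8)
\emph{The plan} is to differentiate the identity $D(\lambda)=0$ repeatedly in $\lambda$ near~$\lambda_0$: the $k$-th derivative turns out to be, up to the nonzero factor $\lambda_0^{-k}$, precisely the integral in~\eqref{eq:poly-int} with $r$ replaced by a polynomial of degree~$k$, and such polynomials span the whole space of polynomials.

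First I would fix a compact interval $[c,d]\subset U$ with $c<\lambda_0<d$. Since $\lambda_0$ is an interior point and $U\subseteq[0,1]$, we have $\lambda_0\in(0,1)$, and we may take $c>0$. A direct differentiation gives, for every $k\in\N$, every $x\in I$ and every $\lambda\in(0,1)$,
\begin{equation}
\label{eq:dlamk}
\frac{\der^k}{\der\lambda^k}\,\lambda^{p(x)}
=
[p(x)]_k\,\lambda^{p(x)-k},
\qquad
[t]_k:=\prod_{j=0}^{k-1}(t-j).
\end{equation}

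Next I would justify interchanging $\frac{\der^k}{\der\lambda^k}$ and $\int_a^b$. Put $M:=\norm{p}_{L^\infty(I)}$. On the set $\joukko{(x,\lambda);x\in I,\ \lambda\in[c,d]}$ the right-hand side of~\eqref{eq:dlamk} is bounded in absolute value by the constant
\begin{equation}
C_k:=\Bigl(\prod_{j=0}^{k-1}(M+j)\Bigr)\,c^{-(M+k)},
\end{equation}
because $\abs{[p(x)]_k}\leq\prod_{j=0}^{k-1}(M+j)$ and, since $0<c\leq\lambda<1$ and $\abs{p(x)-k}\leq M+k$, we have $\lambda^{p(x)-k}\leq c^{-(M+k)}$. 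As $I$ is a bounded interval, $\rho\in L^2(I)\subseteq L^1(I)$, so $C_k\abs{\rho}$ is an integrable majorant uniform in $\lambda\in[c,d]$. By the standard theorem on differentiation under the integral sign, applied inductively on~$k$, the function $D$ is $C^\infty$ on $(c,d)$ with
\begin{equation}
D^{(k)}(\lambda)=\int_a^b[p(x)]_k\,\lambda^{p(x)-k}\,\rho(x)\der x,\qquad\lambda\in(c,d).
\end{equation}

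Finally, since $D\equiv 0$ on $U\supseteq(c,d)$, all derivatives of~$D$ vanish at~$\lambda_0$; multiplying $D^{(k)}(\lambda_0)=0$ by $\lambda_0^{k}\neq 0$ gives $\int_a^b[p(x)]_k\,\lambda_0^{p(x)}\,\rho(x)\der x=0$ for every $k\geq 0$. The polynomials $[t]_0,[t]_1,[t]_2,\dots$ have degrees $0,1,2,\dots$ and hence form a basis of the space of polynomials, so every polynomial~$r$ is a finite linear combination of them; by linearity of the integral, \eqref{eq:poly-int} follows. \emph{The main obstacle} is the interchange of differentiation and integration, and this is where the hypotheses enter: boundedness of~$p$ together with positivity of~$\lambda_0$ (forced by $U\subseteq[0,1]$ being open around~$\lambda_0$) produce the uniform majorant, while integrability of~$\rho$ on the bounded interval~$I$ makes that majorant integrable; the rest is elementary algebra of falling factorials.
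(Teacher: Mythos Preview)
Your proof is correct and follows essentially the same approach as the paper's: differentiate $D$ repeatedly at~$\lambda_0$, observe that every derivative vanishes, and conclude by spanning the polynomials. The only cosmetic difference is that the paper applies the operator $\lambda\,\frac{\der}{\der\lambda}$ (which produces the monomials $p(x)^k$ directly) whereas you apply $\frac{\der}{\der\lambda}$ and then multiply by~$\lambda_0^k$ (producing the falling factorials $[p(x)]_k$); since both families are bases, the argument is the same. Your explicit dominated-convergence bound for the interchange of differentiation and integration is in fact more detailed than the paper's appeal to Morera's theorem.
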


\begin{proof}
The function $D$ is smooth.
In fact, it is complex analytic in a neighborhood of~$U$ as a consequence of Morera's theorem~\cite[theorem 10.17]{Rudin:2006}.

For any natural number~$k$ applying the operator $\lambda \frac{\der}{\der \lambda}$ to~$D$ a total of $k$~times gives
\begin{equation}
\parens{\lambda \frac{\der}{\der \lambda}}^k D(\lambda) = \int_a^b  \lambda^{p(x)}\rho(x) \sulut{p(x)}^k \der x.
\end{equation}
Derivatives of all orders vanish at $\lambda=\lambda_0$, whence
\begin{equation}
\label{eq:vv1}
\int_a^b  \lambda_0^{p(x)}\rho(x) \sulut{p(x)}^k \der x =0
\end{equation}
for all~$k$.
Finite linear combinations of these integrals give equation~\eqref{eq:poly-int}.
\end{proof}

The identity~\eqref{eq:poly-int} holds for any function~$r$ that can be approximated by polynomials in a suitable sense.
We will next study this approximation.

The following multiplicative system theorem is a version of the monotone class theorem~\cite[theorem~1.1]{Kallenberg:1997} written in terms of functions, rather than sets~\cite[chapter~1, theorems~19--21]{Dellacherie:Meyer:1978}.

\begin{lemma}[{Multiplicative system theorem, \cite[chapter~1, theorem~21]{Dellacherie:Meyer:1978}}]
\label{lma:multisystem}
Suppose~$H$ is a vector space of real-valued bounded measurable functions on a measurable space~$X$.
Suppose~$H$ contains constant functions and is closed under the pointwise convergence of uniformly bounded increasing sequences of functions.
Let $E \subseteq H$ be closed under pointwise multiplication, and let~$\mathcal{G}$ be the sigma-algebra generated by~$E$.

Then~$H$ contains all bounded $\mathcal{G}$-measurable functions.
\end{lemma}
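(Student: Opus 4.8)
The plan is to reduce everything to the statement that $\chi_A\in H$ for every $A$ in the $\sigma$-algebra $\mathcal{G}$. Granting this, let $g$ be bounded and $\mathcal{G}$-measurable, say $\abs{g}\le M$; then $g+M$ is a nonnegative bounded $\mathcal{G}$-measurable function, hence the pointwise limit of an increasing, uniformly bounded sequence of simple functions $\sum_k\tfrac{k}{2^n}\chi_{A_{n,k}}$ with $A_{n,k}\in\mathcal{G}$. Each such simple function lies in $H$ because $H$ is a vector space containing the indicators $\chi_{A_{n,k}}$, and the limit $g+M$ lies in $H$ by the monotone-limit hypothesis; subtracting the constant $M$ (which is in $H$) shows $g\in H$. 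So it suffices to handle indicators of sets in $\mathcal{G}$.

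Before building the indicators I would record two closure properties. First, $H$ is closed under uniform limits of sequences: if $h_n\in H$ and $h_n\to h$ uniformly, pass to a subsequence with $\norm{h_{n_{k+1}}-h}_\infty\le\tfrac13\norm{h_{n_k}-h}_\infty$ and note that $g_k:=h_{n_k}-2\norm{h_{n_k}-h}_\infty$ is an increasing, uniformly bounded sequence in $H$ converging pointwise to $h$, so $h\in H$. Second, the linear span $\mathcal{A}$ of $E\cup\joukko{1}$ is an algebra of bounded functions contained in $H$; closure of $\mathcal{A}$ under multiplication is exactly where the multiplicative structure of $E$ is used, since a product of two linear combinations of elements of $E$ is again a linear combination of products $f_ig_j\in E$. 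Consequently, for any finitely many $f_1,\dots,f_n\in E$ (which are bounded, since $E\subseteq H$) and any polynomial $q$ in $n$ variables, $q(f_1,\dots,f_n)\in\mathcal{A}\subseteq H$; and by the Weierstrass theorem (polynomials are uniformly dense in the continuous functions on a compact box containing the range of $(f_1,\dots,f_n)$) together with the first closure property, $\psi(f_1,\dots,f_n)\in H$ for every continuous $\psi$.

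Now I would produce indicators of a $\pi$-system generating $\mathcal{G}$. Fix $f_1,\dots,f_n\in E$ and $c_1,\dots,c_n\in\R$, and for each $i$ pick continuous $\psi_{i,m}\colon\R\to[0,1]$ increasing pointwise as $m\to\infty$ to $\chi_{(c_i,\infty)}$, e.g.\ $\psi_{i,m}(t)=\min\joukko{1,m(t-c_i)^+}$. Then $\prod_{i=1}^n\psi_{i,m}(f_i)$ is, for each $m$, of the form $\psi(f_1,\dots,f_n)$ for a continuous $\psi$ on $\R^n$, hence in $H$; as $m\to\infty$ this product increases (each factor is nonnegative and increasing), stays bounded by $1$, and converges pointwise to $\chi_A$, where $A=\bigcap_{i=1}^n\joukko{f_i>c_i}$. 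Thus $\chi_A\in H$. The family $\mathcal{P}$ of all such finite intersections is a $\pi$-system, and $\sigma(\mathcal{P})=\sigma(E)=\mathcal{G}$ because already the sets $\joukko{f>c}$ with $f\in E$, $c\in\R$, generate $\sigma(E)$. Finally, $\mathcal{D}:=\joukko{A\subseteq X;\chi_A\in H}$ is a $\lambda$-system: it contains $X$ since $1\in H$, is closed under complements since $\chi_{A^c}=1-\chi_A$, and is closed under countable disjoint unions because the partial sums of the indicators form an increasing, uniformly bounded sequence in $H$. Since $\mathcal{P}\subseteq\mathcal{D}$, the $\pi$--$\lambda$ theorem \cite[theorem~1.1]{Kallenberg:1997} gives $\mathcal{G}=\sigma(\mathcal{P})\subseteq\mathcal{D}$, which is what the reduction required.

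The step I expect to demand the most care is the construction of the indicators: since $H$ is not assumed to be closed under products, one cannot argue directly via $\chi_{A\cap B}=\chi_A\chi_B$, and the argument must instead pass through continuous --- hence polynomial-approximable --- functions of the generators $f_i$, which is precisely the place where the hypothesis that $E$ is a multiplicative system is indispensable. The elementary lemma that $H$ is closed under uniform limits is also a minor pitfall if the subsequence is not chosen to decay fast enough.
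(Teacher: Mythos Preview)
The paper does not prove this lemma at all: it is quoted verbatim as the multiplicative system theorem from Dellacherie--Meyer and used as a black box in the proof of lemma~\ref{lma:poly-closure}. So there is nothing in the paper to compare your argument against.

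That said, your proof is correct and is essentially the standard one. The reduction to indicators, the uniform-limit closure via the geometrically decaying subsequence, the passage from the multiplicative set $E$ to the algebra $\mathcal A=\operatorname{span}(E\cup\{1\})$, the Weierstrass step giving $\psi(f_1,\dots,f_n)\in H$ for continuous $\psi$, and the $\pi$--$\lambda$ argument all check out. In particular, your care in building $\chi_{\bigcap_i\{f_i>c_i\}}$ as a monotone limit of continuous functions of the $f_i$---rather than trying to multiply indicators directly inside $H$---is exactly the right move, and your verification that $\mathcal D=\{A:\chi_A\in H\}$ is a $\lambda$-system (in the complements-plus-disjoint-unions formulation) is clean. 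One small cosmetic point: when you invoke the monotone-limit hypothesis for the uniform-closure lemma, it is worth remarking that the uniform limit $h$ is automatically bounded and measurable, so the hypothesis indeed applies; you implicitly use this but do not state it.
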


We note that the multiplicative system theorem considers a vector space of functions~$H$ and a subset~$E$, whereas we operate with Lebesgue spaces of equivalence classes of functions.

We consider the set of functions
\begin{equation}
\label{eq:E-def}
\tilde E
=
\{
\lambda_0^{p(\cdot)}
r(p(\cdot))
\,;\,r\text{ polynomial}
\}
\subset
L^\infty(I)
.
\end{equation}

Recall that~$\sigma(p)$ is the smallest sigma-algebra
that makes~$p$ measurable and contains sets of measure zero.

\begin{lemma}
\label{lma:poly-closure}
The closure of~$\tilde E$ in the $L^2(I)$-norm is $L^2(I,\sigma(p))$.
\end{lemma}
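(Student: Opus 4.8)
The plan is to apply the multiplicative system theorem (Lemma~\ref{lma:multisystem}) to produce, at the level of bounded functions, all bounded $\sigma(p)$-measurable functions inside the $L^2$-closure of $\tilde E$, and then upgrade from bounded functions to all of $L^2(I,\sigma(p))$ by density. The two inclusions to establish are: (i) $\overline{\tilde E}^{\,L^2} \subseteq L^2(I,\sigma(p))$, and (ii) $L^2(I,\sigma(p)) \subseteq \overline{\tilde E}^{\,L^2}$. Inclusion (i) is the easy direction: every element of $\tilde E$ is of the form $\lambda_0^{p(\cdot)} r(p(\cdot))$, which is a composition of a Borel function with $p$, hence $\sigma(p)$-measurable; since $\tilde E \subset L^\infty(I) \subset L^2(I)$ and $L^2(I,\sigma(p))$ is a closed subspace of $L^2(I)$, the closure of $\tilde E$ stays inside it.

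For inclusion (ii), first I would set up the vector space $H$ as prescribed by Lemma~\ref{lma:multisystem}. A natural choice is to let $H$ consist of all bounded $\sigma(p)$-measurable functions $g$ on $I$ whose product $\lambda_0^{p(\cdot)} g$ lies in the $L^2$-closure $V := \overline{\tilde E}^{\,L^2}$; equivalently, one can work with the weighted space and let $H = \{ g \text{ bounded}, \sigma(p)\text{-measurable} : \lambda_0^{p(\cdot)} g \in V \}$. One must check the hypotheses of the multiplicative system theorem: $H$ is a vector space (clear, since $V$ is a linear subspace), it contains the constant functions (because $\lambda_0^{p(\cdot)} \cdot 1 \in \tilde E$ with $r \equiv 1$), and the generating set $E := \{ p^k : k \in \N_0\} \subset H$ — or rather the image $\{ q \circ p : q \text{ polynomial}\}$ — is closed under pointwise multiplication and generates the sigma-algebra $\sigma(p)$ (up to the convention about null sets, which needs a remark). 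The membership $E \subset H$ is exactly Lemma~\ref{lma:polynomi}'s supporting computation repackaged: $\lambda_0^{p(\cdot)} p(\cdot)^k \in \tilde E \subset V$.

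The step I expect to be the main obstacle is verifying that $H$ is closed under pointwise convergence of uniformly bounded increasing sequences. If $g_n \uparrow g$ with $\|g_n\|_\infty \le C$, then $\lambda_0^{p(\cdot)} g_n \to \lambda_0^{p(\cdot)} g$ pointwise and dominated by $C$ (since $0 \le \lambda_0^{p} \le 1$ whenever $p \ge 0$, or more generally $\lambda_0^{p}$ is bounded because $p \in L^\infty$), so by dominated convergence $\lambda_0^{p(\cdot)} g_n \to \lambda_0^{p(\cdot)} g$ in $L^2(I)$; since each $\lambda_0^{p(\cdot)} g_n \in V$ and $V$ is $L^2$-closed, the limit is in $V$, hence $g \in H$. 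The subtlety here is purely bookkeeping — making sure the domination constant is uniform and that $I$ has finite measure so $L^\infty$ convergence with domination gives $L^2$ convergence — but it is worth stating carefully. There is also a minor point about the null-set convention in the definition of $\sigma(p)$: the multiplicative system theorem gives all bounded $\sigma(q\circ p : q)$-measurable functions, i.e. all bounded $p^{-1}(\mathrm{Borel})$-measurable functions, and one observes that adjoining Lebesgue-null sets does not change $L^2$ equivalence classes, so this matches $L^2(I,\sigma(p))$.

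Finally, from the conclusion of Lemma~\ref{lma:multisystem} we get that every bounded $\sigma(p)$-measurable $g$ satisfies $\lambda_0^{p(\cdot)} g \in V$. Dividing by the strictly positive bounded function $\lambda_0^{p(\cdot)}$ — which is bounded away from $0$ since $p \in L^\infty(I)$ and $\lambda_0 \in (0,1)$, so $\lambda_0^{p(\cdot)} \ge \lambda_0^{\|p\|_\infty} > 0$ — is a bounded linear isomorphism of $L^2(I)$ onto itself, and it maps $V$ into the $L^2$-closure of $\{ r(p(\cdot)) : r \text{ polynomial}\}$; combining, every bounded $\sigma(p)$-measurable function lies in the $L^2$-closure of $\tilde E$ (one has to chase the multiplication by $\lambda_0^{p(\cdot)}$ back and forth, but since it is invertible with bounded inverse this is clean). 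Since bounded $\sigma(p)$-measurable functions are $L^2$-dense in $L^2(I,\sigma(p))$ (truncation), we conclude $L^2(I,\sigma(p)) \subseteq \overline{\tilde E}^{\,L^2}$, which together with (i) proves the lemma.
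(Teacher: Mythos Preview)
Your proposal is correct and follows essentially the same approach as the paper: both invoke the multiplicative system theorem (Lemma~\ref{lma:multisystem}) with $E$ the polynomials of~$p$, verify closure under bounded monotone limits via dominated convergence on the finite-measure interval~$I$, handle the null-set completion of~$\sigma(p)$ as a remark, and pass from bounded functions to all of $L^2(I,\sigma(p))$ by truncation. The only cosmetic difference is that the paper divides out the strictly positive bounded factor $\lambda_0^{p(\cdot)}$ at the very start (reducing to $\lambda_0=1$) rather than carrying it through $H$ and inverting at the end as you do; your final ``chase'' is slightly muddled in phrasing but the substance---that multiplication by $\lambda_0^{p(\cdot)}$ is a bijection on bounded $\sigma(p)$-measurable functions, so $\lambda_0^{p(\cdot)} g\in V$ for all such~$g$ immediately gives $h=\lambda_0^{p(\cdot)}(\lambda_0^{-p(\cdot)}h)\in V$---is correct.
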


The proof is by Nathaniel Eldredge~\cite{Eldredge:2018}, and similar to proofs in variable exponent Calderón's problem~\cite[lemmas 18 and 27]{Brander:Winterrose:2019}.
We omit the space~$L^2$ from the notation of the closure.


\begin{proof}[Proof of lemma~\ref{lma:poly-closure}]
The function $\lambda_0^{p(\cdot)}$ is $\sigma(p)$-measurable and bounded away from zero and infinity.
Therefore it may be ``divided out'' and it suffices to prove the lemma in the case $\lambda_0=1$.
We write as~$E$ the space of polynomials of~$p$.

Every continuous function of~$p$ is $\sigma(p)$-measurable, so the equivalence classes of the functions in~$E$ form a subspace of $L^2\sulut{I, \sigma(p)}$.
The space $L^2\sulut{I, \sigma(p)}$ is closed in~$L^2(I)$, since a converging sequence in~$L^2$ has a pointwise almost everywhere converging subsequence~\cite[theorem~3.12]{Rudin:2006}, and the limit of such a subsequence is, as the limit of measurable functions, measurable with respect to the same sigma-algebra.
Hence, we have $\ol{E} \subseteq L^2\sulut{I, \sigma(p)}$, where we understand~$\ol E$ as a space of equivalence classes of functions.

For the other direction, $L^2\sulut{I, \sigma(p)} \subseteq \ol{E}$, we start by considering the vector space~$H=\{f\,;\, [f]\in\ol{E}\cap L^\infty\sulut{I}\}$, which consists of all functions whose equivalence classes are in $\ol{E}\cap L^\infty\sulut{I}$.
It satisfies all the assumptions of the multiplicative system theorem:
\begin{itemize}
\item Constant functions are bounded and polynomials of the function~$p$.
\item If a uniformly bounded sequence converges pointwise, then the sequence of the squared absolute values of the functions also does so, as do their equivalence classes.
By monotone convergence, the~$L^p$ norms of the sequence converge.
\end{itemize}
We have that all representatives of~$E$ are in~$H$ and~$E$ is closed under pointwise multiplication.
By multiplicative system theorem (lemma~\ref{lma:multisystem}), $H$ contains all bounded $\sigma(p)$-measurable functions, whence $L^2\sulut{I, \sigma(p)} \cap L^\infty\sulut{I} \subseteq \ol{E}\cap L^\infty\sulut{I}$.

Consider a (not necessarily bounded) equivalence class of functions~$h \in L^2\sulut{I, \sigma(p)}$ and define
\begin{equation}
h_n(x) = \max\sulut{-n,\min\sulut{h,n}}.
\end{equation}
Then, for all $n \in \N$,  $h_n \in  H \subset \ol{E}$ and $h_n \to h$ in $L^2(I)$ as $n \to \infty$, so $h \in \ol{E}$.
\end{proof}

We note that we use a slightly different formalism than the paper of Brander and Winterrose~\cite{Brander:Winterrose:2019}; here,~$\sigma(p)$ is a completion of a sigma-algebra, whereas they instead use a mapping $[g]_{\sigma_p} \to [g]_{\Leb}$, which maps equivalence classes without the completion into equivalence classes with it.
This leads to superficial differences in the proof of lemma~\ref{lma:poly-closure} when compared to the similar proofs of \cite[lemmas 18 and 27]{Brander:Winterrose:2019}.

\subsection{Proof of theorem \ref{thm:main}}

We are now ready to prove our main result.
Let us first recall equation~\eqref{eq:Dlambda}:
\begin{equation*}
D(\lambda)
=
\int_a^b\lambda^{p(x)}\rho(x)\der x.
\end{equation*}
We also recall for convenience the main theorem, which is stated as:
\begin{theorem*}[Theorem \ref{thm:main}] 
Suppose $\rho \in L^2(I)$ and $p \in L^\infty(I)$.
Let $U\subset(0,1)$ be a nonempty open set.
Then the following are equivalent:
\begin{enumerate}
    \item The function $D(\lambda)$ vanishes for all $\lambda\in U$. 
    \item $P\rho=0$, that is, the function $\rho$ is orthogonal to $L^{2}(I,\sigma(p))$.
    \item For all $A \in \sigma(p)$ it holds that $\int_A \rho(x) \der x = 0$.
\end{enumerate}
\end{theorem*}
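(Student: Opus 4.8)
The plan is to prove the cycle of implications $(1)\Rightarrow(2)\Rightarrow(3)\Rightarrow(1)$, leaning on the lemmas already established. The conceptual core is Lemma~\ref{lma:poly-closure}, which identifies the $L^2$-closure of $\tilde E$ with $L^2(I,\sigma(p))$; everything else is assembling the pieces.

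\textbf{Proof of $(1)\Rightarrow(2)$.}
Suppose $D(\lambda)=0$ for all $\lambda\in U$. Pick any interior point $\lambda_0\in U$, which exists since $U$ is a nonempty open subset of $(0,1)$. By Lemma~\ref{lma:polynomi}, for every polynomial $r$ we have
\begin{equation}
\int_a^b r(p(x))\,\lambda_0^{p(x)}\rho(x)\der x = 0,
\end{equation}
which says exactly that $\rho$ is orthogonal (in $L^2(I)$) to every element of $\tilde E$. Orthogonality to a set is equivalent to orthogonality to the closure of its linear span, so $\rho\perp\overline{\operatorname{span}\tilde E}$. Since $\tilde E$ is already a linear space (polynomials of $p$, multiplied by the fixed function $\lambda_0^{p(\cdot)}$, form a vector space), its closure is $\overline{\tilde E}$, which by Lemma~\ref{lma:poly-closure} equals $L^2(I,\sigma(p))$. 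Hence $\rho\perp L^2(I,\sigma(p))$, i.e.\ $P\rho=0$.

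\textbf{Proof of $(2)\Rightarrow(3)$.}
Assume $P\rho=0$. For any $A\in\sigma(p)$, Lemma~\ref{lemma:proj_to_int} gives
\begin{equation}
\int_A \rho(x)\der x = \int_A P\rho(x)\der x = 0.
\end{equation}

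\textbf{Proof of $(3)\Rightarrow(1)$.}
Assume $\int_A\rho\,\der x=0$ for all $A\in\sigma(p)$. Linearity then gives $\int_I s(p(x))\rho(x)\der x=0$ for every $\sigma(p)$-measurable simple function $s$, hence for every bounded $\sigma(p)$-measurable function by uniform approximation by simple functions. Fix $\lambda\in U\subset(0,1)$. The function $x\mapsto\lambda^{p(x)}$ is $\sigma(p)$-measurable (as a continuous function of $p$) and bounded, since $p\in L^\infty(I)$ forces $\lambda^{\|p\|_\infty}\le\lambda^{p(x)}\le\lambda^{-\|p\|_\infty}$. Therefore
\begin{equation}
D(\lambda)=\int_a^b \lambda^{p(x)}\rho(x)\der x = 0,
\end{equation}
and this holds for every $\lambda\in U$.

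The main obstacle is not in this assembly but in Lemma~\ref{lma:poly-closure}, whose nontrivial direction $L^2(I,\sigma(p))\subseteq\overline{\tilde E}$ uses the multiplicative system theorem; that lemma is already proved above, so here the only points demanding care are the $\sigma(p)$-measurability and boundedness of $\lambda^{p(\cdot)}$ (needed both to place $\tilde E$ inside $L^2(I,\sigma(p))$ and to run $(3)\Rightarrow(1)$) and the observation that $\tilde E$ is genuinely a linear subspace so that orthogonality to it is the same as orthogonality to $L^2(I,\sigma(p))=\overline{\tilde E}$.
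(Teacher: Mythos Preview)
Your proof is correct and follows essentially the same approach as the paper: the implication $(1)\Rightarrow(2)$ is identical (Lemma~\ref{lma:polynomi} plus Lemma~\ref{lma:poly-closure}), and your $(2)\Rightarrow(3)$ and $(3)\Rightarrow(1)$ together cover the same ground as the paper's $(2)\Leftrightarrow(3)$ and $(2)\Rightarrow(1)$. The only structural difference is that you close a cycle and thereby bypass Lemma~\ref{lemma:int_to_proj}, proving $(3)\Rightarrow(1)$ directly via simple-function approximation rather than going through $(3)\Rightarrow(2)$; this is a harmless and slightly more economical rearrangement.
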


Now we are ready to prove it.

\begin{proof}[Proof of theorem~\ref{thm:main}]
1$\implies$2: 
By lemma~\ref{lma:polynomi} the~$L^2$ inner product between~$\rho$ and all polynomials of~$p$ is zero.
This implies that~$\rho$ is orthogonal to the~$L^2$ closure of the space of polynomials of~$p$, which, by lemma~\ref{lma:poly-closure}, is $L^2(I,\sigma(p))$.
Hence the projection is also zero.

2$\implies$1:
Let $\lambda \ge 0$. Since $\lambda^{p(x)}$ is $\sigma(p)$-measurable and bounded, we have $\lambda^{p(x)} \in L^2\sulut{I,\sigma(p)}$.
By orthogonality the $L^2$-inner product in equation~\eqref{eq:Dlambda} is zero.

2$\iff$3: This is lemmas~\ref{lemma:proj_to_int} and \ref{lemma:int_to_proj}.

If~$p$ injective, then $\sigma(p)=\Leb$ and injectivity of $\rho\mapsto D$ follows.
\end{proof}


\section{Probabilistic interpretation}
\label{sec:prob}

In this section we prove the main theorem in probabilistic language.
We make frequent use of the basic properties of conditional measures~\cite[section~5, especially theorem~5.1]{Kallenberg:1997} \cite[chapter 2, number~41]{Dellacherie:Meyer:1978}.
Let $\F$ be the sigma-algebra of Lebesgue-measurable sets on $\R$ (sometimes restricted to $I$ without changing the notation) and let $\Prob(A) = \frac{\der x}{b-a}$ be the Lebesgue measure rescaled into a probability measure.
Then $(I, \F, \Prob)$ is a probability space.
We use the Iverson bracket notation
\begin{equation}
[p \in A]
=
\begin{cases}
1, & \text{ if } p(x) \in A \\
0, & \text{ if } p(x) \notin A.
\end{cases} 
\end{equation}

\begin{theorem}
Suppose $\rho \in L^1\parens{ I, \F, \Prob }$ and $p \in L^\infty \parens{ I, \F, \Prob } $.
Let $U \subset \R_+$ be a nonempty open set and write $D\parens{\lambda} = \E \lambda^p \rho $.

Then the following are equivalent:
\begin{enumerate}
    \item For all $\lambda \in U$ we have $D(\lambda) = 0$.   \label{tfae:1}
    \item $\E \parens{ \rho | p } = 0 $ holds $\Prob$-a.s.  \label{tfae:2}
    \item For all $A \in \F$ we have $\E [p \in A] \rho = 0$.   \label{tfae:3}
\end{enumerate}
\end{theorem}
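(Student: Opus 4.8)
The plan is to funnel all three conditions through a single object: the pushforward of the finite signed measure $\rho\,\der\Prob$ under the map $p$. Set $\nu \coloneqq p_{*}(\rho\,\der\Prob)$, a signed Borel measure on $\R$. Since $\rho \in L^1(I,\F,\Prob)$ the measure $\nu$ is finite, and since $p \in L^\infty$ its support is contained in the compact interval $[-\norm{p}_\infty,\norm{p}_\infty]$. The image measure formula rewrites the data as
\begin{equation}
D(\lambda) = \E\,\lambda^p\rho = \int_\R \lambda^t\,\der\nu(t), \qquad \lambda \in \R_+.
\end{equation}
I would first dispatch \eqref{tfae:2}~$\iff$~\eqref{tfae:3} by showing that each of them is equivalent to $\nu = 0$. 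For \eqref{tfae:3}, the image measure formula gives $\E\,[p\in A]\rho = \int_{p^{-1}(A)}\rho\,\der\Prob = \nu(A)$, so \eqref{tfae:3} says precisely that $\nu$ vanishes on $\F$. For \eqref{tfae:2}, the defining property of conditional expectation says $\E(\rho\mid p) = 0$ a.s.\ if and only if $\int_G \rho\,\der\Prob = 0$ for all $G \in \sigma(p)$; since every $\sigma(p)$-measurable set coincides, up to a null set, with $p^{-1}(B)$ for some Borel $B \subseteq \R$ (this is the factorization already used in section~\ref{sec:def-not}, and passing to the null-set completion of $\sigma(p)$ does not change the conditional expectation), this is equivalent to $\nu(B) = 0$ for every Borel $B$, i.e.\ $\nu = 0$.

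The remaining work is \eqref{tfae:1}~$\iff$~$\nu = 0$. The implication $\nu = 0 \implies$~\eqref{tfae:1} is immediate from the displayed formula. For the converse I would argue as in lemma~\ref{lma:polynomi} but in the present generality: because $\nu$ is finite and supported in a bounded set, the substitution $\lambda = e^{-s}$ turns $D$ into $g(s) = \int_\R e^{-st}\,\der\nu(t)$, which extends to an entire function of $s \in \C$, differentiation under the integral sign being legitimate since the integrand is dominated by $e^{\abs{\real s}\norm{p}_\infty}$, a constant in $L^1(\nu)$, locally uniformly in $s$. As $D$ vanishes on $U$, the entire function $g$ vanishes on a set with a cluster point, so $g \equiv 0$ by the identity theorem; equivalently, applying $\parens{\lambda\frac{\der}{\der\lambda}}^k$ to $D$ and evaluating at $\lambda = 1$ gives $\int_\R t^k\,\der\nu(t) = 0$ for every $k \in \N$. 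Since $\supp\nu$ is compact, the Weierstrass approximation theorem makes polynomials dense in $C(\supp\nu)$, so $\int_\R \varphi\,\der\nu = 0$ for every continuous $\varphi$, and the Riesz representation theorem yields $\nu = 0$. (Alternatively, $g$ entire and vanishing on $\R$ forces $\widehat{\nu} \equiv 0$, so $\nu = 0$ by uniqueness of the Fourier transform.) This argument is insensitive to whether $U$ is open or merely a sequence with a cluster point, which is the promised extension.

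The step I expect to be the main obstacle is the analytic continuation: justifying that $D$ (equivalently $g$) is the restriction of an entire function, so that vanishing on $U$ propagates to the vanishing of all moments of $\nu$. This is exactly where both hypotheses enter — $\rho \in L^1$ makes $\nu$ a finite measure and $p \in L^\infty$ bounds $\abs t$ on $\supp\nu$, and together they license differentiation under the integral sign. Everything else — the image measure formula, the factorization of $\sigma(p)$-measurable sets, Weierstrass and Riesz — is routine, so the proof is short once the holomorphy is in place.
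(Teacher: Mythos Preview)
Your argument is correct and complete, but it follows a genuinely different route from the paper's probabilistic proof. You organise everything around the single pushforward measure $\nu = p_*(\rho\,\der\Prob)$ on $\R$, reducing each of the three conditions to the statement $\nu = 0$; the paper instead works directly on $(I,\F,\Prob)$ with random variables and conditional expectations throughout. The main substantive difference is in the step \eqref{tfae:1}$\implies$\eqref{tfae:2}: you invoke analytic continuation (the identity theorem for the entire function $g(s)=\int e^{-st}\,\der\nu(t)$) to obtain vanishing of all moments, whereas the paper deliberately avoids complex analysis and instead builds a finite-difference approximation of $\partial_\mu^j \widetilde D(\mu_0)$ from the values $\widetilde D(\mu_n)$ via Lagrange interpolation and Rolle's theorem. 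After that, you pass from moments to $\nu=0$ by Weierstrass approximation and Riesz representation on the compact set $\supp\nu\subset\R$, while the paper stays on $I$ and uses Stone--Weierstrass followed by monotone convergence to reach indicator functions of $\{p\in J\}$. Your approach is shorter and conceptually cleaner---the pushforward trick replaces the multiplicative system/monotone class machinery by classical facts about measures on a compact interval---but it imports holomorphy, which the paper's interpolation argument was designed to sidestep; conversely, the paper's proof is entirely real-variable and arguably more in keeping with the probabilistic framing of the section.
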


\begin{remark}
In \ref{tfae:1} in the theorem, it is sufficient that there exists a countably infinite set of frequencies $\lambda$ with a cluster point $\lambda_0$ such that $D(\lambda) = 0$.
\end{remark}
    
We prove the theorem with the remark included, as the remark is stronger than the theorem.
\begin{proof}
We show that the first and the second condition are equivalent and that the second and the third condition are equivalent.

\paragraph{\ref{tfae:2} $\implies$ \ref{tfae:1}:}
For all $\lambda \in \R_+$
\begin{equation}
    \begin{split}
        D(\lambda)  &= \E \lambda^p \rho = \E \parens{ \E \parens{ \lambda^p \rho | p } }   \\
                    &= \E \parens{ \lambda^p \E \parens{ \rho | p } } = 0.
    \end{split}
\end{equation}

\paragraph{\ref{tfae:2} $\iff$ \ref{tfae:3}:}
Define $\xi = \E \parens{ \rho | p }$.
Then, by definition, $\xi$ is the unique $\sigma(p)$-measurable random variable in $L^1\parens{ I, \F, \Prob }$ satisfying
$
\E[p \in A] \xi = \E [p \in A] \rho.
$
Now, \ref{tfae:2} implies \ref{tfae:3}:
\begin{equation}
    \E \parens{ [p \in A] \rho } = \E \parens{ [p \in A]\E\parens{ \rho | p } } = 0.
\end{equation}
On the other hand, \ref{tfae:3} implies \ref{tfae:2}. For all $A \in \F$
\begin{equation}
    \E[p \in A] \rho = 0 = \E[p \in A] \cdot 0.
\end{equation}
Thus, by uniqueness of $\xi$, we have $\xi = 0$ as a random variable, which is claim~\ref{tfae:2}.

\paragraph{\ref{tfae:1} $\implies$ \ref{tfae:2}:}
Let $V$ be a countably infinite set of points $\mu$ with cluster $\mu_0$.
Write $\mu_j = \log \lambda_j$ and suppose they converge to $\mu_0$ as a strictly increasing or a strictly decreasing sequence (by taking a subsequence without changing the notation).
Since $p$ is bounded, we can define $\widetilde D$ by $\widetilde D(\mu) = D(e^\mu) = 0$ for all $\mu \in V$.


By using $j+1$ consecutive points from the  sequence~$(\mu_k)$, that is the points $\mu_{n}, \mu_{n+1}, \dots, \mu_{n+j}$, we can construct the
Lagrange polynomial $L_{j,n}$ approximation of order $j$ for a function $f$, namely
\begin{equation}
L_{j,n}(\mu) = \sum_{i = 0}^j l_{j,n,i}(\mu) f(\mu_{n+i})
\end{equation}
where the function $l_{j,n,i}$ is the Lagrange basis polynomial
\begin{equation}
l_{j,n,i}(\mu) = \prod_{k = 0, k \ne i}^j \frac{\mu - \mu_k}{\mu_i - \mu_k}.
\end{equation}
These are defined for every function $f$, for every $n = 1, 2, \dots$ and for every $j = 1, 2, \dots$.

The $j$th order Lagrange polynomial $L_{j, n}$ coincides with the function $f$ (at least) in $j + 1$ points in the interval $I_n$ between $\mu_n$ and $\mu_{n + j}$, so by Rolle's Theorem the $j$th order derivative $L^{(j)}_{j, n}$ coincides with the $f^{(j)}$ at least in one point in the interval $I$. Therefore, by the Lagrange approximation, the derivatives of order $j$ of the error 
\begin{equation}
R_{j,n}(\mu) = f(\mu) - L_{j,n}(\mu)
\end{equation}
for a smooth function $f \in C^{j + 1}$ is bounded in the interval $I_n$ by
\begin{equation}
\sup_{\mu \in I_n} |\, \partial_\mu^{(j)}R_{j, n}(\mu)| \le \sup_{\mu \in I} \,| f^{(j + 1)}(\mu) \,| \, |\,\mu_n - \mu_{n + j} \,|.
\end{equation}
 Since the distance $|\, \mu_n - \mu_{n + j} \,| \to 0$ as $n \to \infty$, the $j$th derivative of the Lagrange polynomial $\partial_\mu^{(j)}L_{j, n}$ evaluated at $\mu_n$ is an approximation of the $j$th derivative of the function $f$ evaluated at $\mu_n$. Let $a_{n,j,i} = \partial_{\mu}^{(j)}l_{n,j,i}(\mu_n)$ be the corresponding coefficient in the expansion
\begin{equation}
(\partial_{\mu}^{(j)} L_{j, n})(\mu_n) = 
\sum_{i = 0}^{j} \partial_{\mu}^{(j)}l_{n,j,i}(\mu_n) f(\mu_{n + i})
= \sum_{i = 0}^{j} a_{n,j,i} f(\mu_{n + i})
\end{equation}
Note that the coefficients don't depend on the function $f$, but only on the indices~$n,i,j$ and the sequence~$(\mu_n)$.
Hence, we assume that the function $f(\mu) = e^{\mu p}$, which is a smooth function with random coefficient $p$ in the exponent. Moreover, the $j$th derivative of $f$ is $f^{(j)}(\mu) = p^j f(\mu)$.

Therefore,
 we have for every $n$ and every $j$ that
\begin{equation}
    \E \parens{ \rho L_{j, n}(\mu_n) }
    = \E \parens{ \rho \sum_{i = 0}^{j} a_{n,j,i} e^{p \mu_{n + 1}} }
    = \sum_{i = 0}^{j} a_{n,j,i} \widetilde D(\mu_{n + i})
    = 0
\end{equation}
since $\widetilde D(\mu) = 0$ for every $\mu \in V$.
Since the random variable $p$ is bounded, we may apply dominated convergence and we obtain
\begin{equation}
  \E \parens{ \rho p^j e^{\mu_0 p} }
  = \lim_{n \to \infty} \E \parens{ \rho L_{j + 1, n}(\mu_n) }
  = 0 
\end{equation}
for every $j = 0, 1, \dots$.
%
that implies that with linearity that we have $\E r(p) e^{\mu_0 p} \rho = 0$ for all polynomials~$r$.

The same approximation would be obtained with iterated differences, when these are defined recursively for sequences $\lambda = (f(\mu_n))_n$ as
\begin{equation}
\begin{split}
    d(1, \mu, \lambda)_n &= \frac{\lambda_{n+1} - \lambda_{n}}{\mu_{n + 1} - \mu_n}\\
    d(j + 1, \mu, \lambda)_n &= \frac{d(j, \mu, \lambda)_{n+1} - d(j, \mu, \lambda)_{n}}{\mu_{n + j + 1} - \mu_n}
\end{split}
\end{equation}
where the high-order differences use the right scaling. If the differences would form a grid, these would be the usual higher order stencil approximations. However, showing that this works in general is easiest to show via Rolle's Theorem and therefore, leads directly to the Lagrange approximation.

Now, by standard argument, since $p$ has compact range, the boundedness of $p$ implies by Stone-Weierstrass that for all continuous functions~$f$ we have $\E f(p) e^{\mu_0 p} \rho  = 0$.
Monotone convergence then implies that
\begin{align}
                & \E[p \in J]e^{\mu_0 p} \rho = 0 \text{ for all intervals } J              \\
    \implies    & \E\parens{ e^{\mu_0 p} \rho | p } = 0  \text{ holds } \Prob\text{-a.s.}   \\
    \iff        & e^{\mu_0 p} \E \parens{ \rho | p } = 0 \text{ holds }\Prob\text{-a.s.},
\end{align}
which implies claim~\ref{tfae:2} in the theorem.
\end{proof}

\section{Numerical demonstration}

To generate the synthetic measurement data~$D_j$, we compute the integrals 
\begin{equation}\label{eq:1}
\int_0^1 \lambda_j^{p(x)}\rho_0(x) \der x = D_j
\end{equation}
by using the Simpson $(2n+1)$-point quadrature rule, where $n=64$. In all of our examples, the measurements are corrupted by small additive Gaussian white noise with standard deviation~$\sigma$ equal to $0.5\,\%$ of the maximum of absolute value of the measurements.
Next, to solve the inverse problem of recovering~$\rho_0$ in~\eqref{eq:1} from the measurements~$D_j$ we first substitute instead of~$\rho_0$ the piecewise linear form
$$
\rho_0(x)\approx \sum_{k=1}^N f_k \phi_k(x),
$$
where~$\phi_k$ are the hat-functions
$$
\phi_k(x) =
\begin{cases}
\dfrac{x-x_{k+1}}{x_k-x_{k+1}},& x_k\leq x\leq x_{k+1}\\
\dfrac{x-x_{k-1}}{x_k-x_{k-1}},& x_{k-1}\leq x <x_k,\\
0,& \text{otherwise}.
\end{cases}
$$
Then~\eqref{eq:1} becomes
\begin{equation}\label{eq:2}
\sum_{k=1}^N f_k\int_0^1 \lambda_j^{p(x)}\phi_k(x)\der x = D_j+E_j, 
\end{equation}
where~$E_j$ is an error term which we ignore in the sequel.
In practise, the integrals appearing in~\eqref{eq:2} can be numerically precomputed for a given function~$p$.
Finally equation \eqref{eq:2} can be written as the linear system $Af=D$ and solved by some regularization method.
We avoid committing inverse crime~\cite{Mueller:Siltanen:2012} by generating the measurements~$D_j$ independently of the theory matrix~$A$.
Indeed, the measurements are obtained by integrating the precise model~\eqref{eq:1} and adding noise, while the theory matrix~$A$ is computed by only integrating the known function~$\lambda^p(\cdot)$ against~$\phi_k$, $k=1,\ldots,N$, while avoiding the use of the unknown~$\rho_0$.
Further, we shall use smaller number of measurements~$D_j$ than the number of unknowns~$f_k$.
In our case the number of measurements and unknowns will be 300 and 400, respectively. 

The computation is not very demanding and can be expected to work on a modern computer. The numerical work is done in \textsc{Matlab}.

\subsection{Regularization}
Since the properties of this problem depend quite drastically on the functions~$p$ and~$\rho$, at this point we opt to not propose any universal solution to the problem.
Rather, this part should be regarded as a demonstration that the problem can in principle be solved numerically.
As is usual in inverse problems, the linear problem $Af=D$ is rather unstable and regularization methods are necessary.
We have tested Tikhonov, total variation (TV), and conjugate gradient least squares (CGLS) regularization methods.
In Tikhonov regularization~\cite{Hansen:2010, Mueller:Siltanen:2012} one solves the minimization problem
$$
\mathrm{arg\,min}\left\{ \Vert Af - D\Vert_2^2 + \alpha \Vert f \Vert_2^2 \right\}.
$$
This classical regularization method is very simple to implement and in our tests works well with low noise-levels when the unknown~$\rho_0$ is reasonably smooth. The down-side to the $L^2$-penalty is that it  promotes smoother solutions, failing to recover discontinuous and irregular functions.
The TV-regularization instead aims to minimize the expression
$$
\mathrm{arg\,min}\left\{ \Vert Af - D\Vert_2^2 + \alpha \Vert f' \Vert_1 \right\}.
$$
The $L^1$-penalty term allows some steep gradients and thus can be used when $\rho$ is more irregular~\cite{Hansen:2010, Mueller:Siltanen:2012}.
Finally, the CGLS method is an iterative regularization procedure, where one solves the least-squares problem
$$
\mathrm{arg\,min}\left\{ \Vert Af_k - D\Vert_2^2\right\}
$$
subject to the condition that 
$$f_k\in \mathrm{span}\left\{A^TD, (A^TA)A^TD,\ldots,(A^TA)^{k-1}A^TD  \right\},\quad k=0,1,2,\ldots,$$
that is, the iterates belong to a Krylov subspace.
Heuristically, this method attempts to pick only the significant singular components of the solution~$f_k$, because the Krylov subspaces take into account the (noisy) data~$D$.
Using only the significant singular components allows one to reduce the effect of noise.
We use the CGLS-algorithm of Hestenes and Stiefel; see e.g.~\cite{Hansen:2010, Hestenes:Stiefel:1952}.

\subsection{Examples}
Let the number of measurements be $M=300$ and the number of unknown coefficients of the hat functions be $N=400$.
In these examples the measurement points~$\lambda_j$ are uniformly distributed on the interval $(0,1)$.
We also tested different distributions of~$\lambda_j$ on the interval $(0,1)$, but often a good result was obtained with uniform distribution. Certainly this choice depends on the function~$p$ and can be tailored for applications separately.
The functions used in the examples are as follows:
\begin{itemize}
\item Example 1: $p(x)=x$ and $\rho_0(x)=\sin(\pi x)$.
\item Example 2: $p(x)=x$ and $\rho_0(x)=0.3\chi(x)$, where $\chi(x)$ is the characteristic function of the inverval $(0.3,0.6)$.
\end{itemize}

The results are presented in Figure \ref{fig:ex1-2}, where the precise unknown~$\rho_0$ is depicted with the red dashed line and the numerically computed solutions~$\rho$ are shown as the solid blue line.
Apparently Tikhonov- and CGLS-solutions work rather well with smooth~$\rho_0$, while in the discontinuous case the TV-regularized solution is considerably better.
\begin{figure}[t]
\centering
\includegraphics[width=0.3\textwidth]{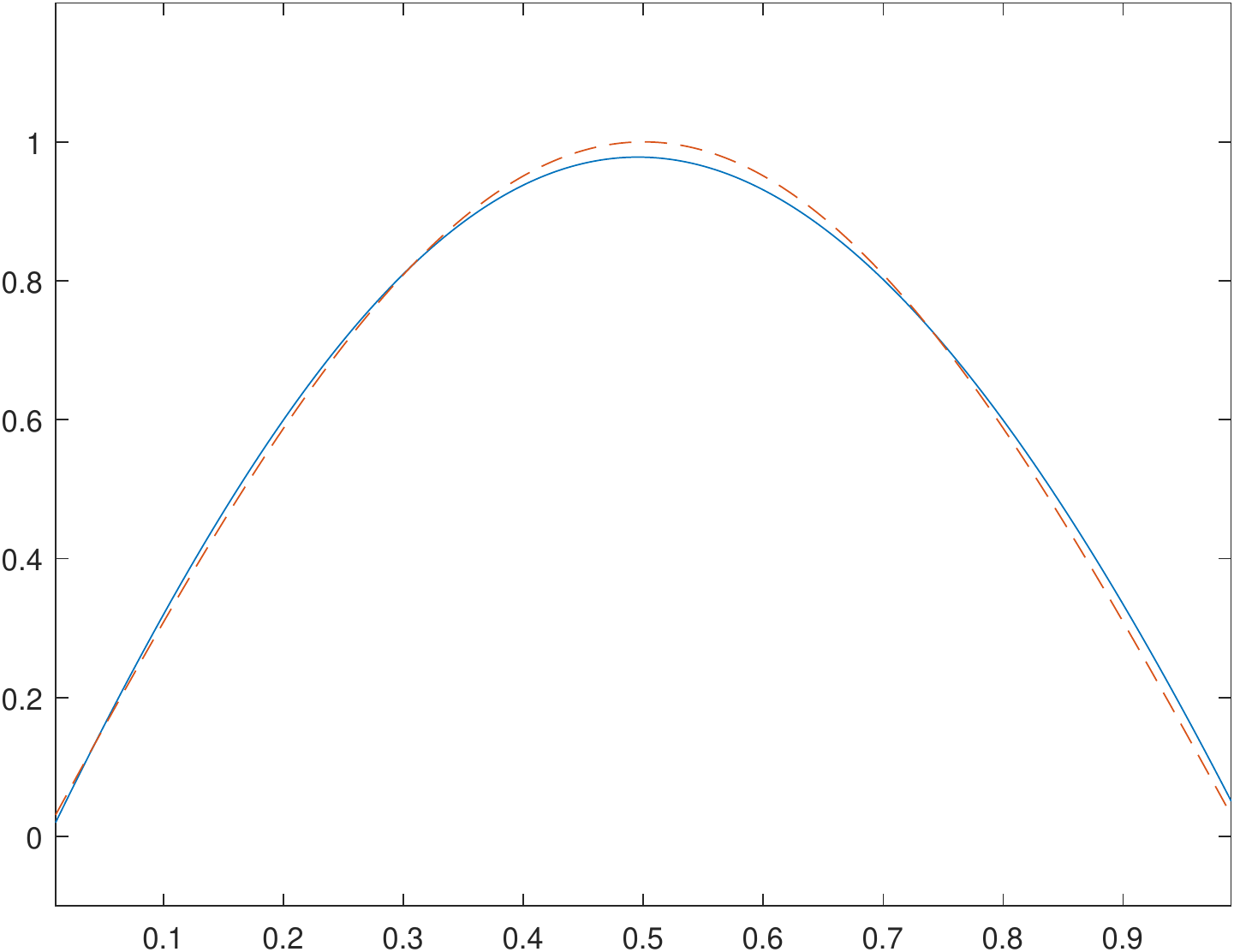}
\includegraphics[width=0.3\textwidth]{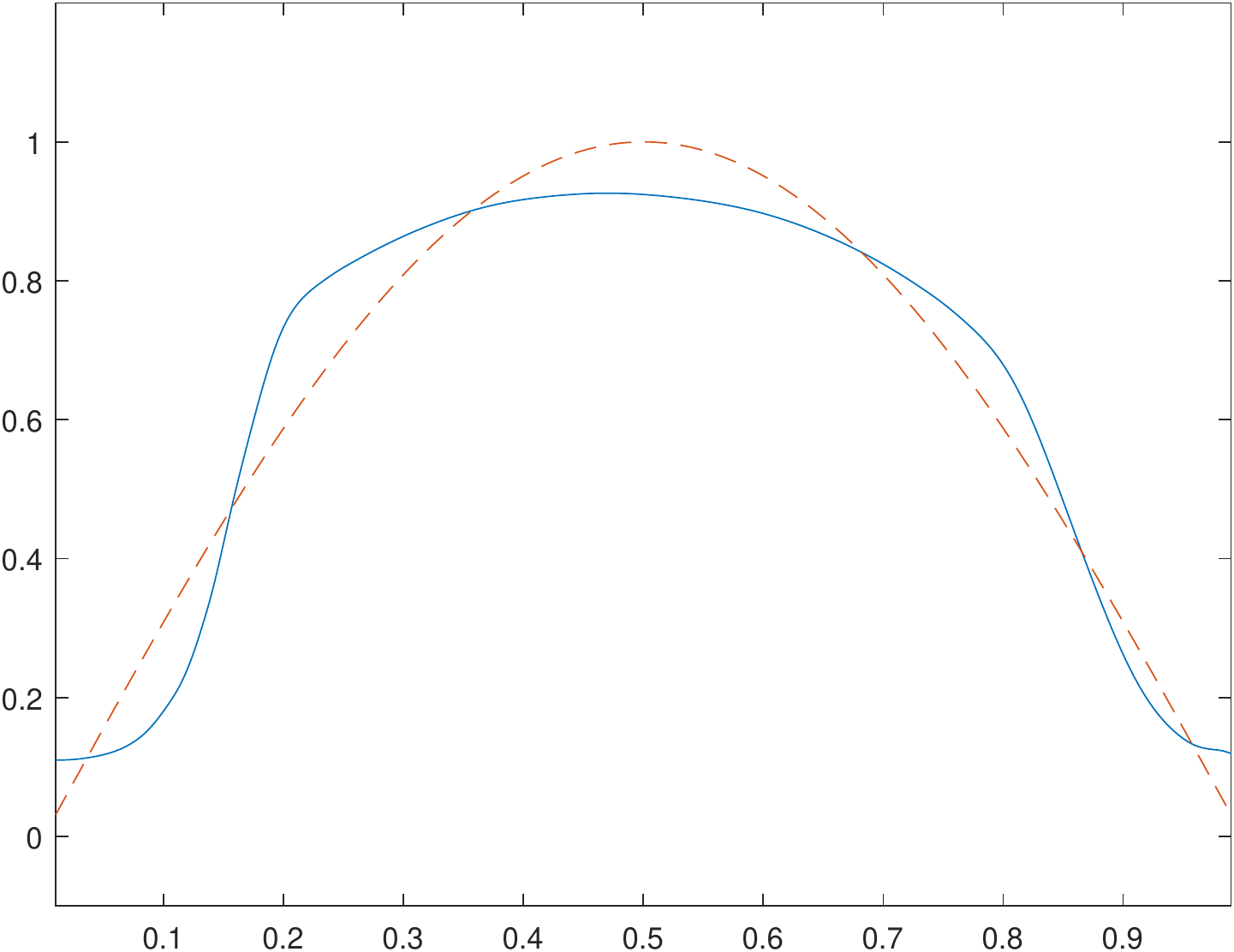}
\includegraphics[width=0.3\textwidth]{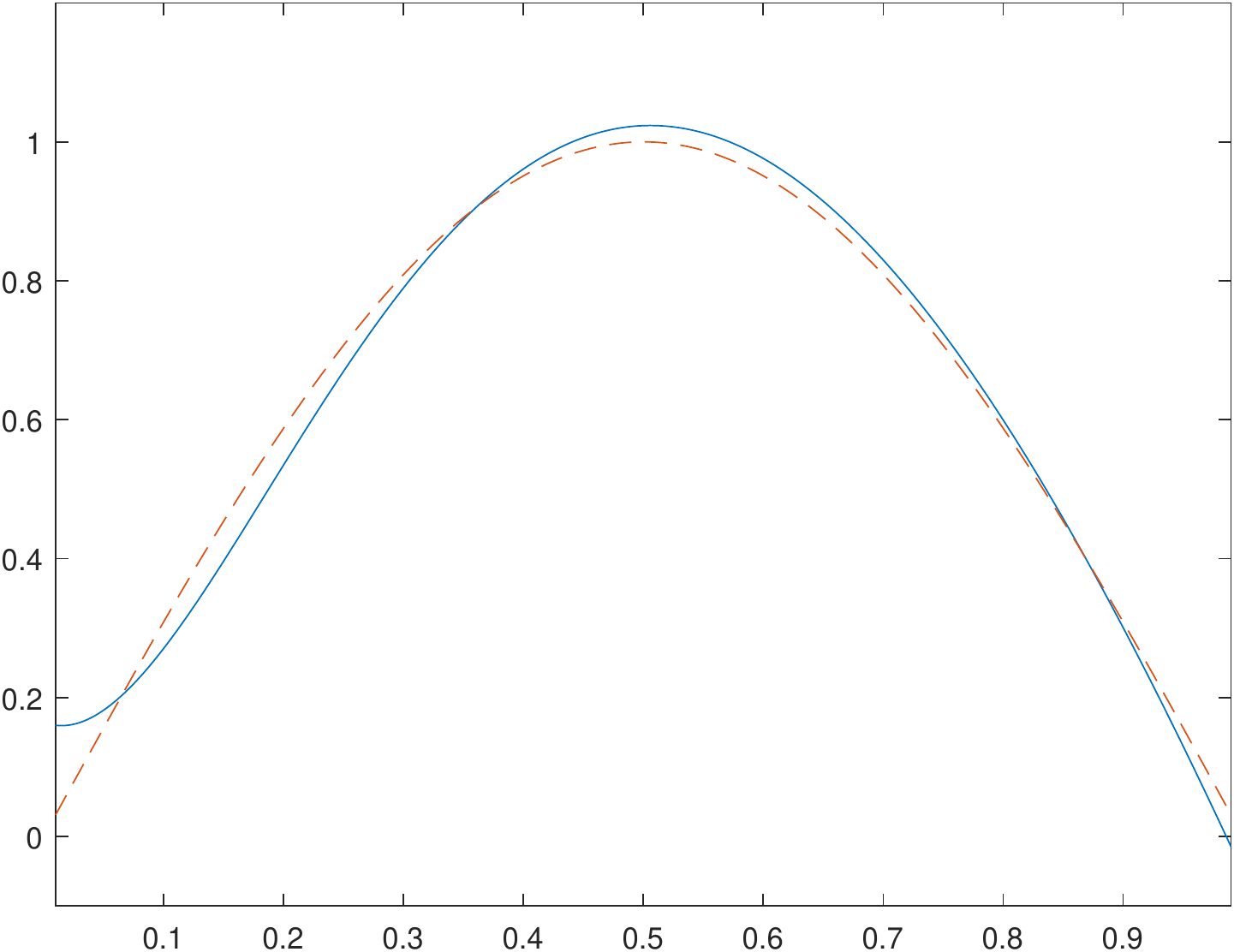}
\\
\vspace*{0.2cm}
\includegraphics[width=0.3\textwidth]{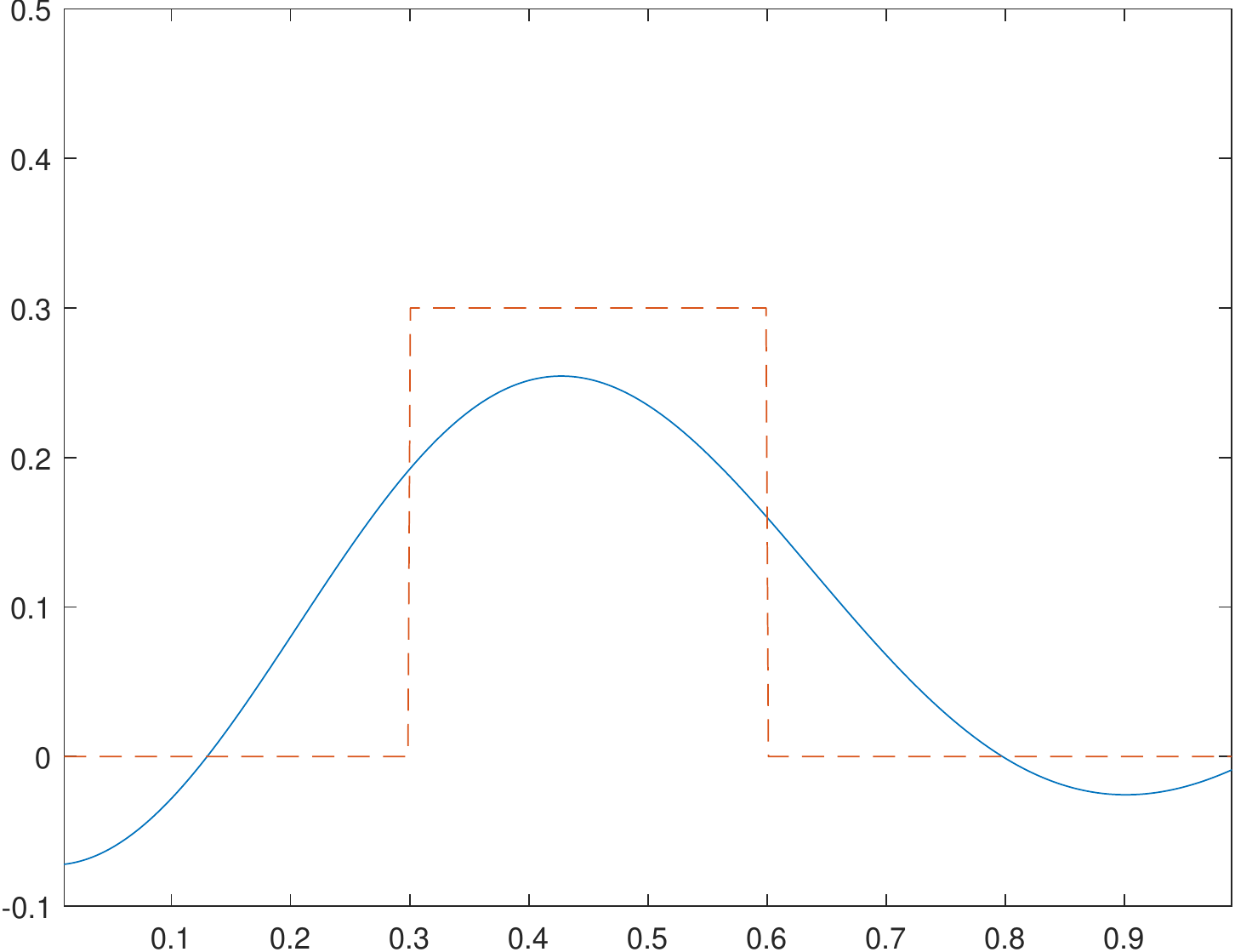}
\includegraphics[width=0.3\textwidth]{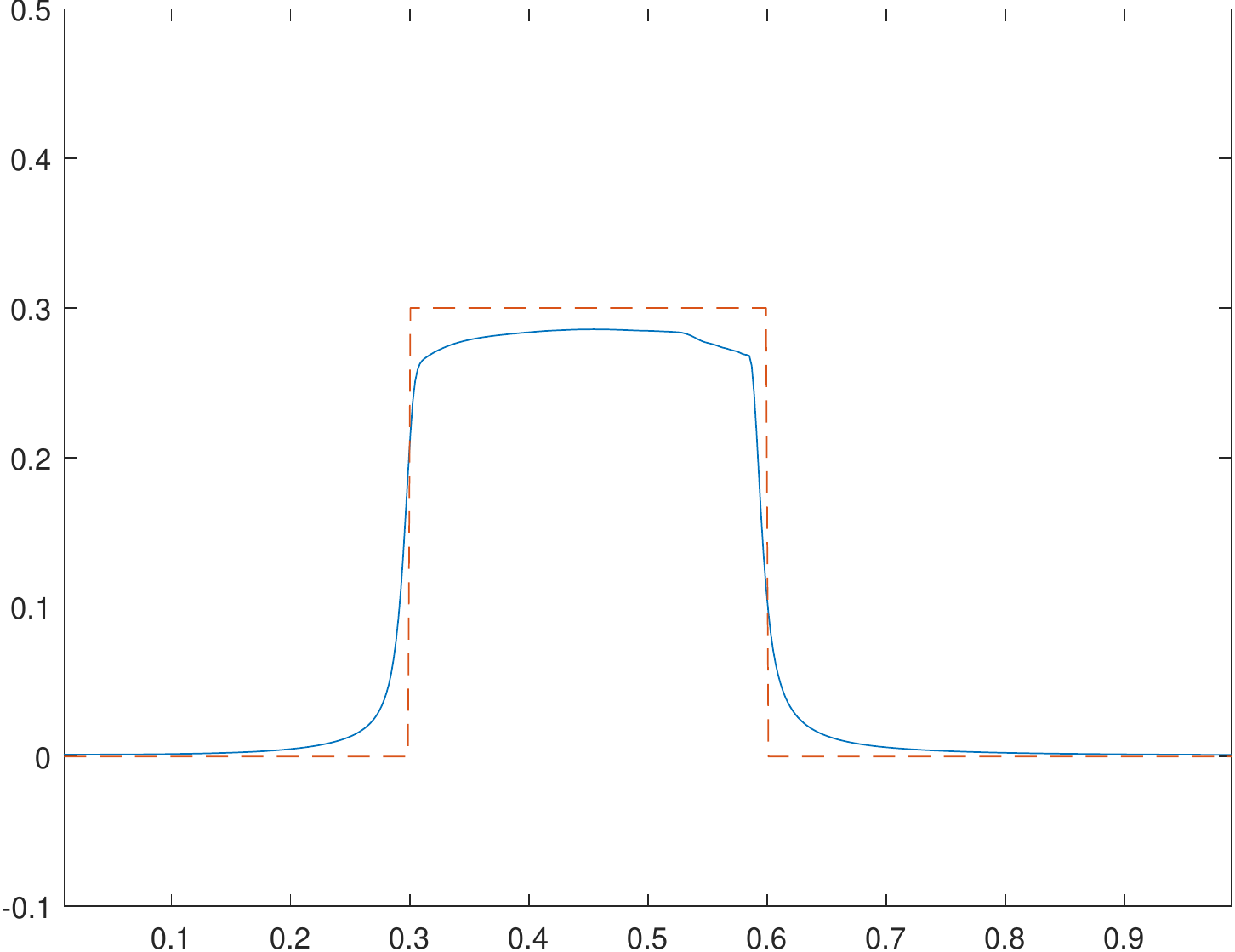}
\includegraphics[width=0.3\textwidth]{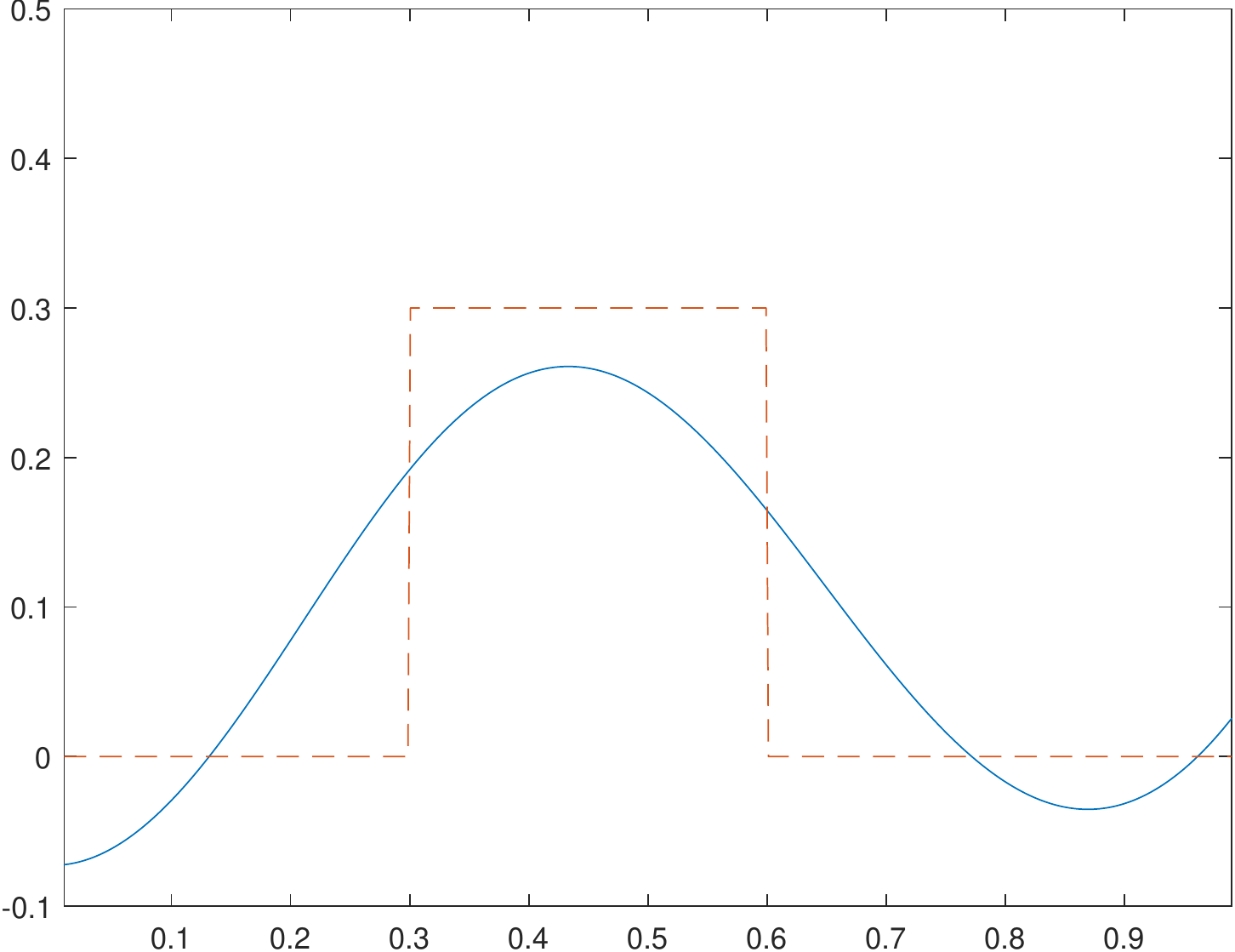}
\caption{Unknown $\rho_0$ (dashed red line) and the numerical solution $\rho$ (solid blue line) with 0.5\% noise level. Example 1 (above) and example 2 (below) with Tikhonov-solution (left), TV-solution (middle) and CGLS-solution (right).\label{fig:ex1-2}}
\end{figure}

\subsection{The sets where $p$ is constant}

If the function~$p$ is constant in some set of positive measure, the theory predicts that we can only hope to solve~\eqref{eq:1} up to the average of~$\rho_0$ in that set.
Let $M=300$, $N=400$, and $\rho_0(x)=\sin(\pi x)$.
The function~$p$ is given in the piecewise form
$$
p(x)=
\begin{cases}
5/3x,&\quad x\in(0,0.2],\\
1/3,&\quad x \in (0.2,0.4],\\
5/3x-1/3,&\quad x \in (0.4,0.6],\\
2/3,&\quad x\in (0.6,0.8],\\
5/3x-2/3,&\quad x \in (0.8,1].
\end{cases}
$$
Here the optimal information to recover is $\sin(\pi x)$ on the intervals $(0,0.2]$, $(0.4,0.6]$, and $(0.8,1]$ and only the average value of $5/2\pi\approx 0.796$ on the intervals $(0.2,0.4]$ and $(0.6,0.8]$.
The results are depicted in Figure~\ref{fig:ex3}, where the unknown~$\rho_0$ is shown in red dashed line, the projection~$P\rho_0$ in black dot-dash line and the numerical solution in solid blue line.

The numerical method is expected to produce a function whose projection (averages over sets where~$p$ is constant) is~$P\rho_0$.
There are many such functions, and the choice depends on regularization.
With Tikhonov one expects to find the function with minimal~$L^2$ norm -- which is precisely~$P\rho_0$ -- but with other regularizations something else.

\begin{figure}[ht]
\centering
\includegraphics[width=0.3\textwidth]{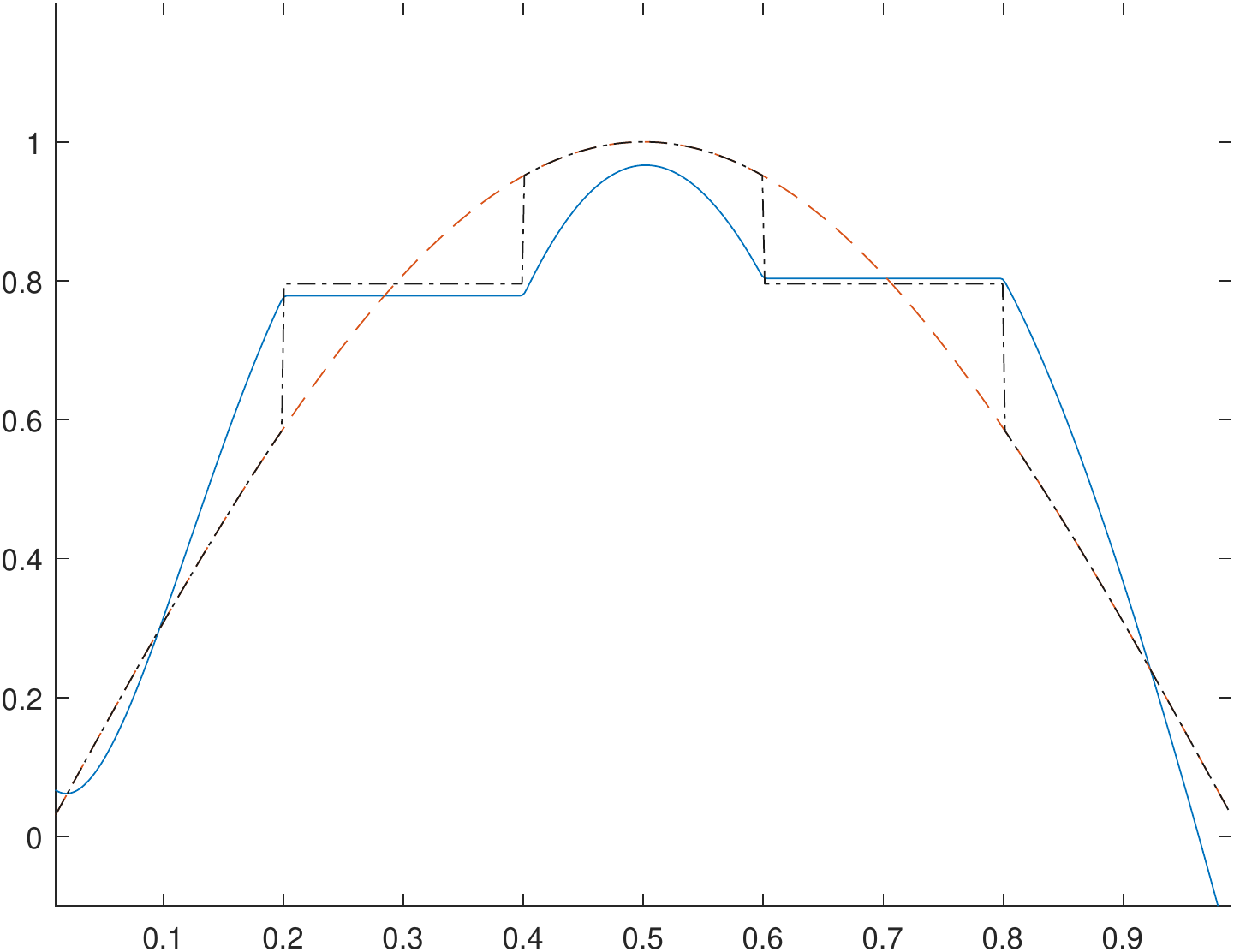}
\includegraphics[width=0.3\textwidth]{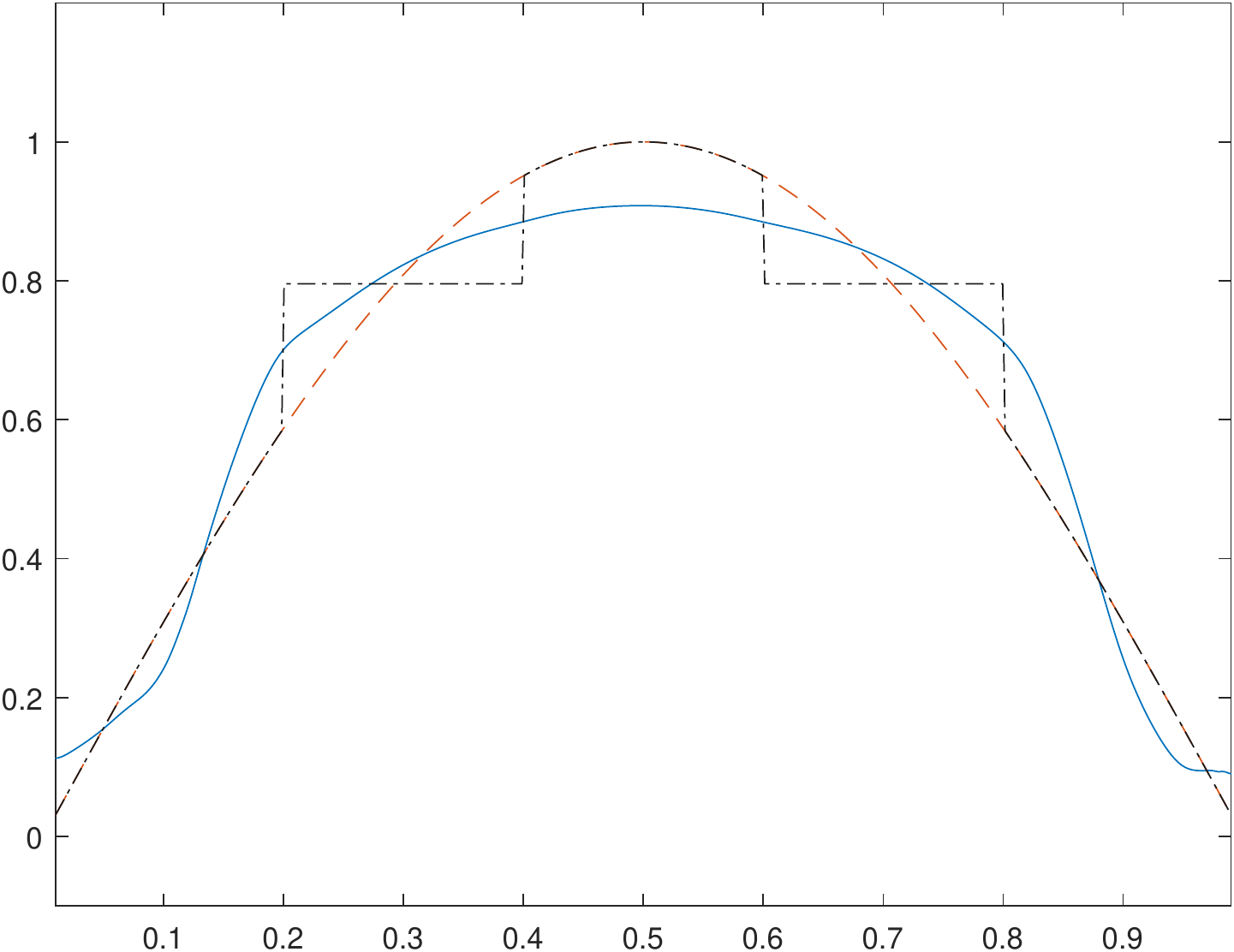}
\includegraphics[width=0.3\textwidth]{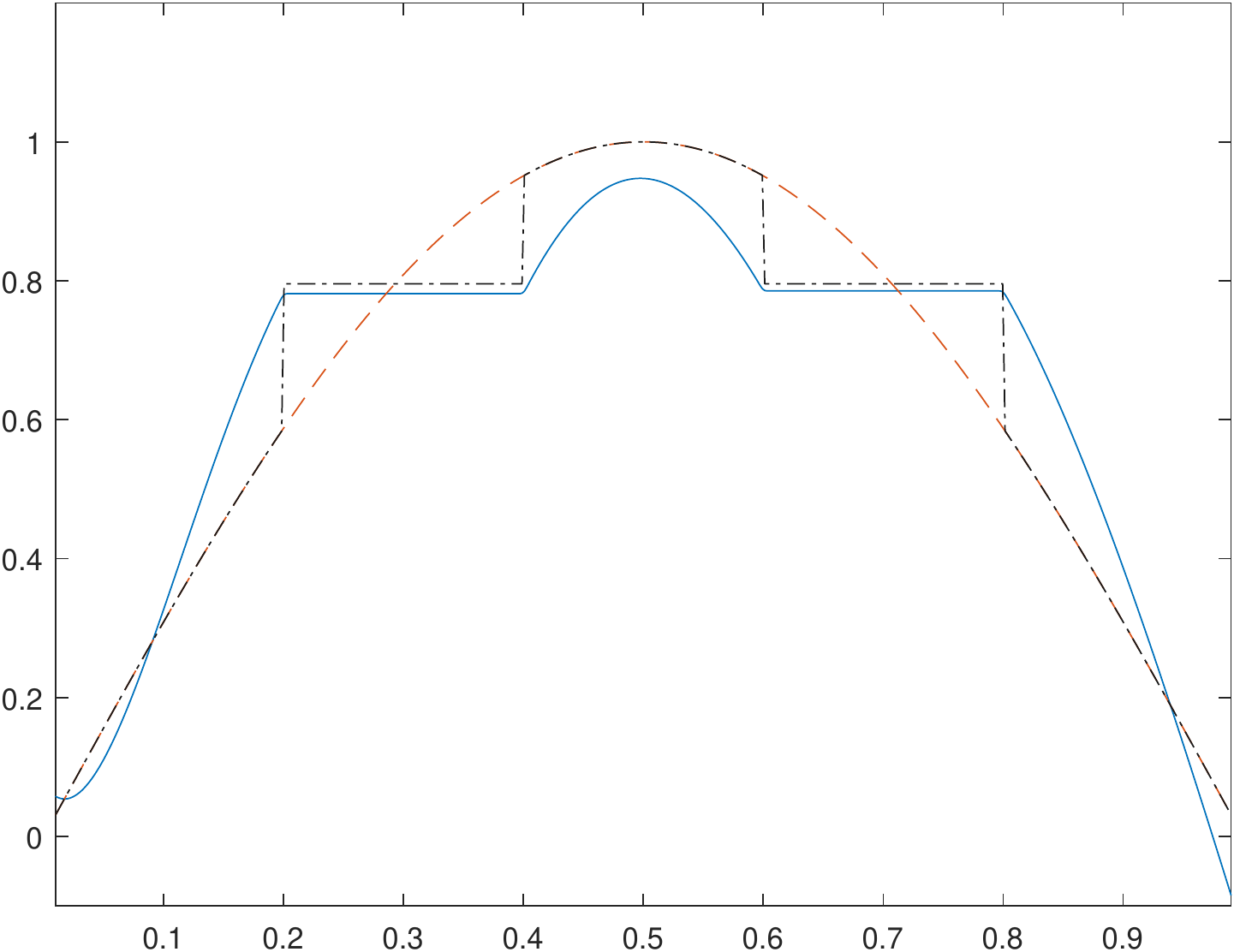}
\caption{Unknown $\rho_0$ (dashed red line), $\rho_0$ averaged over regions where $p$ is constant (black dot-dash line) and the numerical solution $\rho$ (solid blue line) with 0.5\% noise level. Tikhonov-solution (left), TV-solution (middle) and CGLS-solution (right).\label{fig:ex3}}
\end{figure}

\subsection{Limited data}
The theory states that it suffices to have measurements for $\lambda_j$'s in some open interval $(a,b)\subset (0,1)$.
Let $M=300$, $N=400$, the source density~\mbox{$\rho_0(x)=-0.5\sin(2\pi x)+0.5$}, and $p(x)=e^x-1$.
To test the solutions, we used several smaller intervals, solved the problem with Tikhonov regularization one hundred times, and collected the averaged $l^2$-relative errors in the solutions to Table~\ref{table:intervals}. The respective solutions~$\rho$ are depicted in Figure~\ref{fig:ex6}. These solutions are relatively good for all but the smallest interval $(0.4,0.5)$, where the numerical solution is rather unstable. Note that the number of measurements is kept at constant 300 for all solutions. We state without details that similar results are obtained if the measurements are made in some union of open intervals (with large enough measure) contained in $(0,1)$.
\begin{table}[ht]
\centering
\begin{tabular}{|c|c|c|c|c|}
\hline
Intervals& $(0,1)$ & $(0.2,0.8)$ & $(0.3,0.6)$ & $(0.4,0.5)$  \\
\hline
$\overline{\epsilon}_\mathrm{rel}$ & 0.117 & 0.170 & 0.186 & 0.320  \\
\hline
$\mathrm{var}$ & $1.88\cdot 10^{-3}$ & $1.27\cdot 10^{-2}$ & $1.27\cdot 10^{-2}$ & $6.65\cdot 10^{-3}$ \\
\hline
\end{tabular}
\caption{Averaged relative errors and variances of one hundred solutions on smaller intervals with noise level 0.5\%. \label{table:intervals}}
\end{table}

\begin{figure*}[ht]
\centering
\begin{subfigure}[t]{0.45\textwidth}
\includegraphics[width=\textwidth]{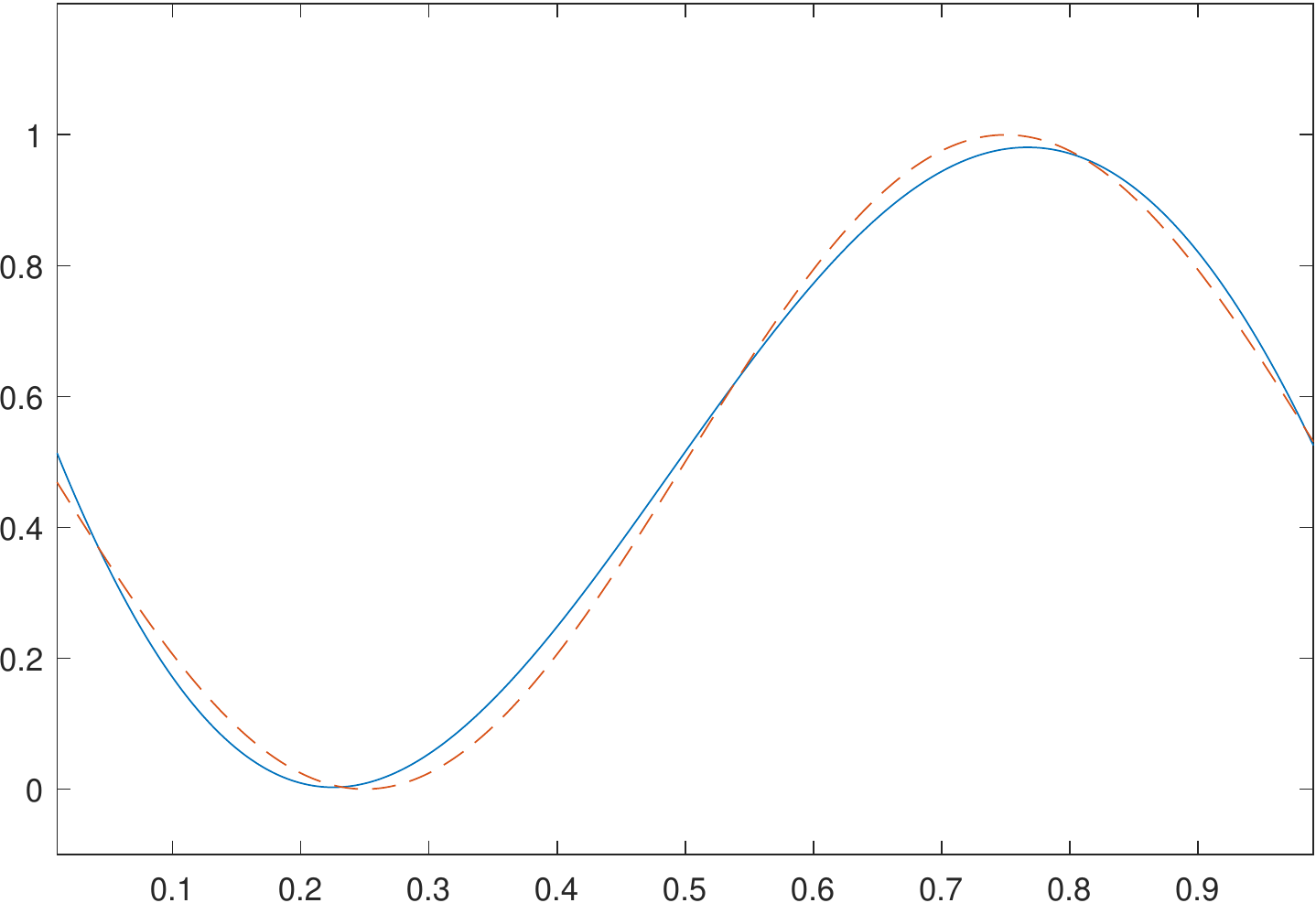}
\caption{Measurement interval $(0,1)$}
\end{subfigure}
\begin{subfigure}[t]{0.45\textwidth}
\includegraphics[width=\textwidth]{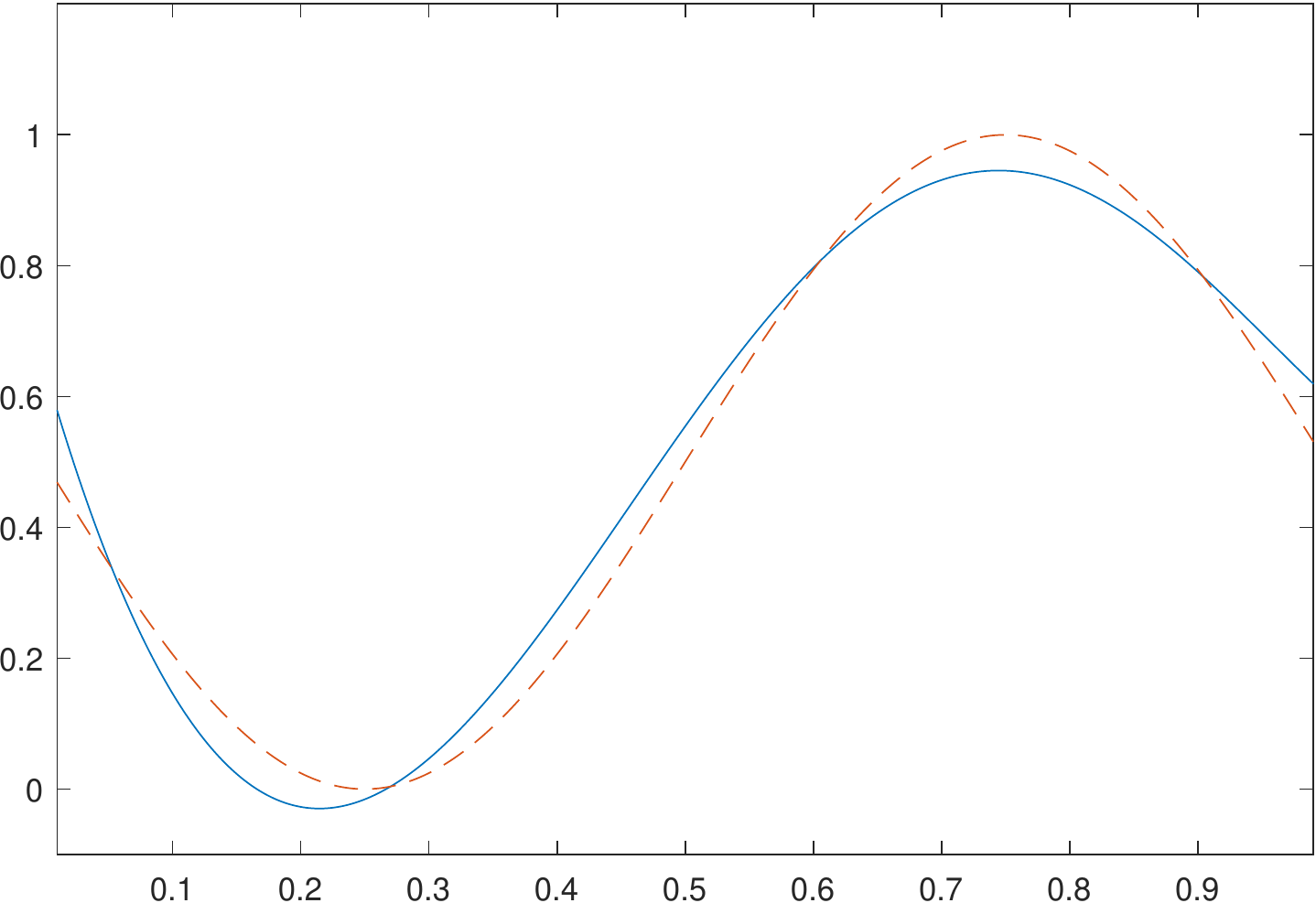}
\caption{Measurement interval $(0.2,0.8)$}
\end{subfigure}\\
\vspace*{0.3cm}
\begin{subfigure}[t]{0.45\textwidth}
\includegraphics[width=\textwidth]{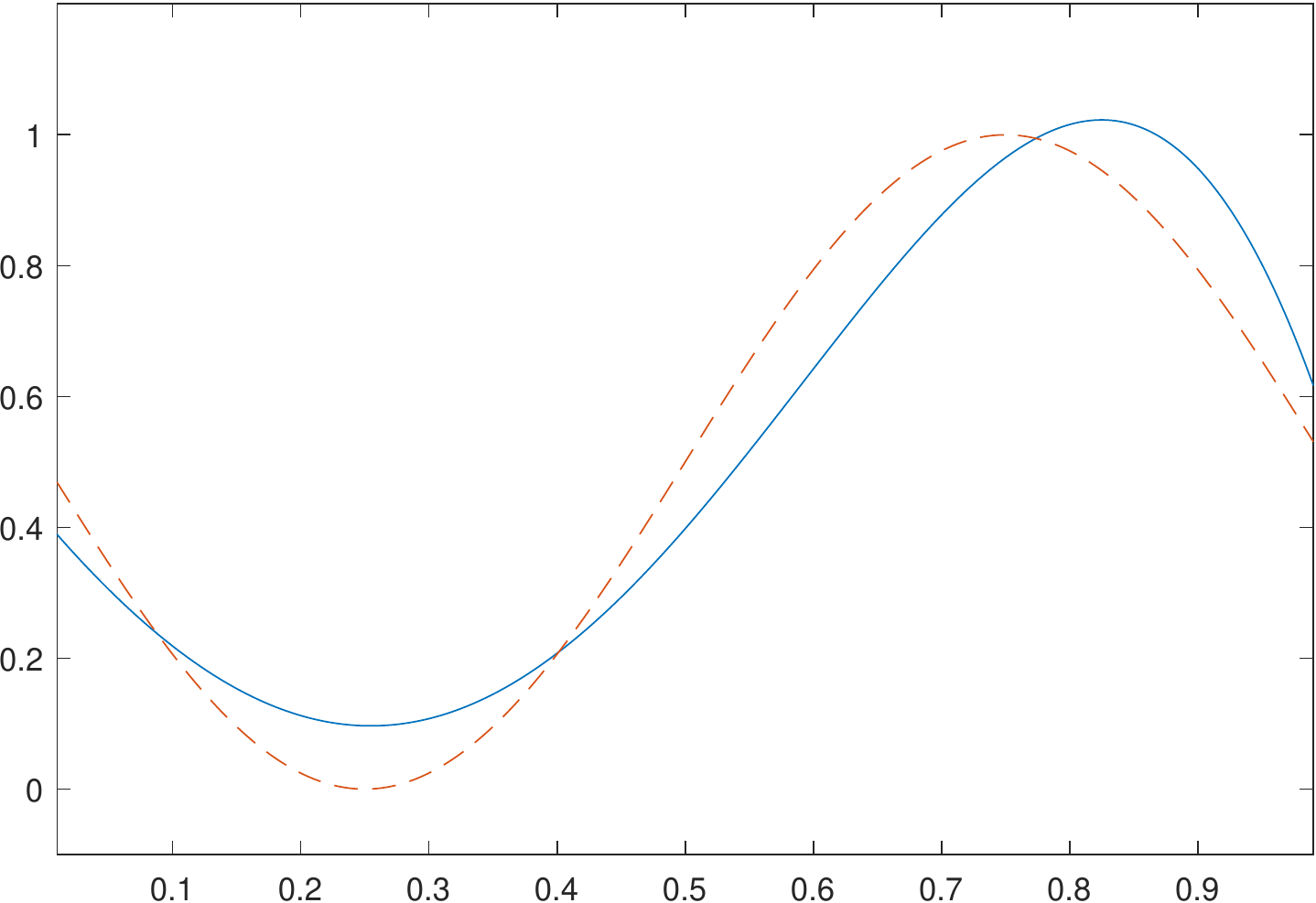}
\caption{Measurement interval $(0.3,0.6)$}
\end{subfigure}
\begin{subfigure}[t]{0.45\textwidth}
\includegraphics[width=\textwidth]{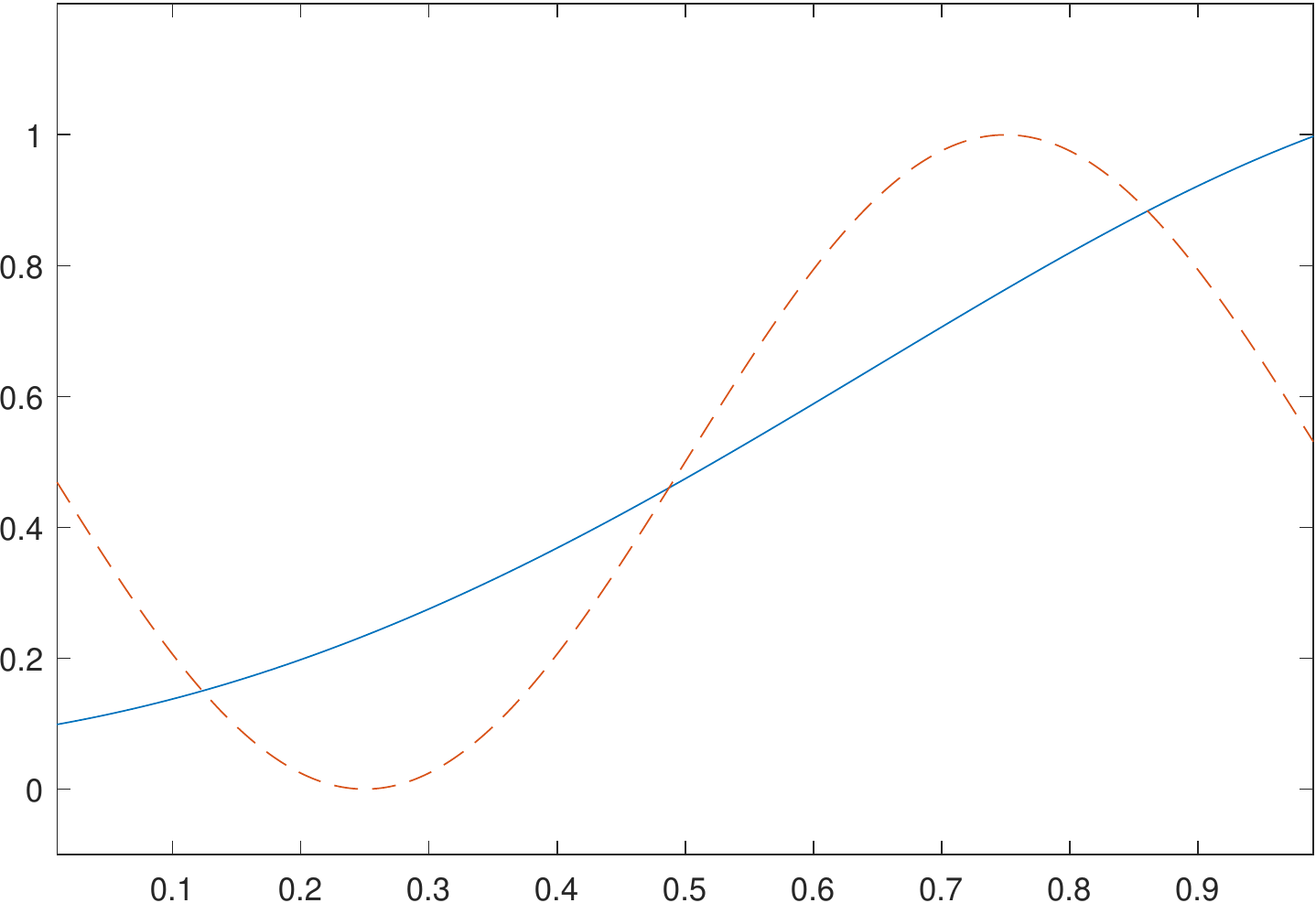}
\caption{Measurement interval $(0.4,0.5)$}
\end{subfigure}
\caption{Unknown $\rho_0$ (dashed red line) and the Tikhonov-solution $\rho$ (solid blue line) with 0.5\% noise level and smaller measurement intervals. \label{fig:ex6}}
\end{figure*}

\bibliographystyle{plain}
\bibliography{math}

\end{document}